\documentclass[a4paper]{amsart}

\usepackage{amsmath,amssymb,amsfonts,amsthm,bm}
\usepackage{mathrsfs}

\theoremstyle{plain}
\newtheorem{theorem}{Theorem}
\newtheorem{proposition}[theorem]{Proposition}
\newtheorem{lemma}[theorem]{Lemma}
\newtheorem{corollary}[theorem]{Corollary}

\theoremstyle{definition}
\newtheorem{example}[theorem]{Example}
\newtheorem{definition}[theorem]{Definition}

\theoremstyle{remark}
\newtheorem{remark}[theorem]{Remark}

\numberwithin{theorem}{section} 
\numberwithin{equation}{section}

\newtheorem{mainthm}{Theorem} 
\newtheorem{maindfn}[mainthm]{Definition}

\newcommand{\abs}[1]{\left\lvert #1 \right\rvert}
\newcommand{\norm}[1]{\left\lVert #1 \right\rVert}

\begin{document}

\title[Singular value functions for C\(^*\)-algebras]{Singular value functions for C\(^*\)-algebras}
\author{Naoto Fujitsu}
\address{Department of Pure and Applied Mathematics, Graduate School of Information Science 
and Technology, The University of Osaka, Yamadaoka 1-5, Suita, Osaka 565-0871, Japan}
\email{fujitsu-n@ist.osaka-u.ac.jp}
\keywords{singular value function, singular value, ordered K-theory}
\subjclass[2020]{Primary 46L51, Secondary 46L05;47B06}

\begin{abstract}
  We introduce \textit{singular value functions} for C\(^*\)-algebras, generalizing the singular values of compact operators on Hilbert spaces.
  We also establish several fundamental properties of these singular value functions and present examples of singular value functions for certain classes of C\(^*\)-algebras. 
\end{abstract}

\maketitle

\section{Introduction}

In this article, 
we define singular value functions for C\(^*\)-algebras, 
motivated by the theory of singular value functions for von Neumann algebras.

The notion of singular value functions for von Neumann algebras was introduced as a generalization of the singular value sequence for \(\mathcal{K}\), the algebra of compact operators on a Hilbert space (see \cite[pp.~560--562]{Dodds.Pagter.Sukochev:2023:Noncommutative_Integration_and_Operator_Theory}). 
The singular value functions for II\(_1\)-factors were introduced by Murray and von Neumann in \cite{Murray.von_Neumann:I}.
The singular value functions for semifinite von Neumann algebras with faithful traces are formulated, for example, in \cite{TFack:1982:SVF, Ovchinnikov:70:s-numbers, Sonis:71}, 
and their fundamental properties are clarified in \cite{TFack.Kosaki:1986:Generalized_s-numbers}.
Let \(\mathcal{M}\) be a semifinite von Neumann algebra with a faithful trace \(\tau\).
In this case, singular value functions are not necessarily sequences indexed by nonnegative integers, but functions on \(\mathbb{R}^+\) (more precisely, on \(\tau(P(\mathcal{M})) \subset \mathbb{R}^+\)).
Indeed, for \(a \in \mathcal{M}\) and \(t \in \mathbb{R}^+\), the singular value function is defined by
\begin{equation*}
  s_t(a):=\inf\{\norm{a-ap}\mid p \text{ is a projection in } \mathcal{M} \text{ with } \tau(p)\leq t\}.
\end{equation*}
Accordingly, the singular value function of an element \(a\in\mathcal{M}\) is the map
\begin{equation*}
  s(a): [0,\infty)\longrightarrow\mathbb{R}^+,\qquad t\longmapsto s_t(a).
\end{equation*}

Motivated by the above construction, we introduce singular value functions for C\(^*\)-algebras.
It is natural to define singular value functions for a C\(^*\)-algebra on equivalence classes of projections.
Accordingly, we take the positive cone of the \(K_0\)-group as their domain.

\begin{maindfn}[Definition~\ref{SVF:definition}]
  Let \(A\) be a C\(^*\)-algebra. 
  For \(a\in A\), 
  we define a function \(s(a)\) on \(K_0(A)^+\) by  
  \begin{equation*}
    s_g(a):=\inf\{\norm{a-ap}\mid{p}\in{P(A)},\;[p]_0\leq g\}\in\mathbb{R}^+,
  \end{equation*}
  for each \(g\in K_0(A)^+\). 
  We call the function \(s(a)\) the \textit{singular value function} of \(a\). 
  We also regard \(s\) as a map
  \begin{equation*}
    s \colon A \longrightarrow \{\, f \mid f \colon K_0(A)^+ \to \mathbb{R}^+ \,\}, \qquad a \longmapsto s(a).
  \end{equation*}
\end{maindfn}

By definition, the singular value function \(s(a)\) is nonnegative and decreasing for any element \(a\) in a C\(^*\)-algebra \(A\).
These functions coincide with the classical singular values on \(\mathcal{K}\).
In general, we show that 
the singular value functions of positive elements with finite spectrum 
are step functions 
with respect to the order structure of the positive cone of the \(K_0\)-group of \(A\).

We investigate several basic properties of the singular value functions for C\(^*\)-algebras. 
The singular value functions for semifinite von Neumann algebras satisfy properties analogous to those of singular values for \(\mathcal{K}\).
It is therefore natural to ask whether singular value functions for C\(^*\)-algebras enjoy analogous properties.
We establish the following results.

\begin{mainthm}
  Let \(A\) be a C\(^*\)-algebra with cancellation and real rank zero.
  If \(K_0(A)^+\) coincides with the dimension range of \(A\),
  then for \(a,b\in A\), the following hold:
  \begin{enumerate}
    \item \(s_0(a)=\norm{a}\).
    \item \(s(\alpha a)=\abs{\alpha}s(a)\) for \(\alpha\in\mathbb{C}\).
    \item \(s(ba)\leq \norm{b}s(a),\;s(ab)\leq \norm{b}s(a)\).
    \item \(\abs{s_g(a)-s_g(b)}\leq\norm{a-b}\) for \(g\in K_0(A)^+\).
    \item \(s(a)=s(\abs{a})\).
    \item \(s(a)=s(a^*)\).
    \item \(s_{g+h}(a+b)\leq s_g(a)+s_h(b)\) for \(g,h\in K_0(A)^+\).
    \item \(s_{g+h}(ab)\leq s_g(a)s_h(b)\) for \( g,h\in K_0(A)^+\).
    \item Let \(f\colon \mathbb{R}^+\to\mathbb{R}^+\) be a continuous increasing function with \(f(0)=0\). 
    If \(a\in A^+\), then \(s(f(a))=f(s(a))\).
    \item If \(0\leq a\leq b\), then \(s(a)\leq s(b)\).
  \end{enumerate}
\end{mainthm}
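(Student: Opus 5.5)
We work throughout with the standing hypotheses: \(A\) has cancellation, real rank zero, and every \(g\in K_0(A)^+\) is \([p]_0\) for some projection \(p\in A\). The first task is to obtain a usable reformulation of \(s_g\). By cancellation, \([p]_0\le [q]_0\) with \(q\in P(A)\) implies that \(p\) is Murray--von Neumann equivalent to a subprojection of \(q\). Since \(\norm{a-ap}\) is decreasing in \(p\) for the projection order, \(s_g(a)=\inf\{\norm{a-ap}\mid p\in P(A),\ [p]_0=g\}\). The dimension range hypothesis ensures this set is nonempty for every \(g\). Also \(\norm{a-ap}^2=\norm{(1-p)a^*a(1-p)}\), computed in the unitization, so \(s_g(a)^2=s_g(a^*a)\). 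This gives (5) directly, since \(\abs{a}^2=a^*a\) and \(\norm{a-ap}=\norm{\abs{a}-\abs{a}p}\).

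Items (1), (2), (3, left), (4) are routine from the definition. For (1), take \(p=0\); \([p]_0\le 0\) forces \(p=0\) by cancellation. For (4), use \(\norm{a-ap}\le\norm{b-bp}+\norm{a-b}\). For (3, left), use \(\norm{ba-bap}\le\norm{b}\norm{a-ap}\). For (7), take \(p,q\) with \([p]_0\le g\) and \([q]_0\le h\). Replace \(p\vee q\) by a projection \(r\) with \([r]_0\le g+h\) and \(r\ge p,q\) approximately; the sup of projections need not exist in \(A\). Here I would use real rank zero to realize \(p\vee q\) approximately, via the range projection of \(p+q\) approximated by spectral projections of \(p+q\) at small thresholds. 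Then \(\norm{(a+b)(1-r)}\le\norm{a(1-p)}+\norm{b(1-q)}+\varepsilon\). For (8), write \(ab-abr\) with \(r\) dominating \(q\) and "\(b^{-1}\)-pullback" of \(p\). Concretely, take \(r\) close to the projection onto \(q\vee\) (range of \((1-q)b^*p\,b(1-q)\)-type support). The class of that support is at most \([p]_0\), via polar decomposition in a real rank zero algebra. Then \(\norm{ab(1-r)}\le\norm{a(1-p)}\norm{b(1-q)}+\varepsilon\).

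The core step, and the main obstacle, is (6), with (3, right) following from (6) and (3, left). My plan is to prove for \(x\ge0\) a spectral formula: \(s_g(x)=\inf\{t\ge0\mid [\chi_{(t,\infty)}(x)]\text{ is}\le g\}\), interpreted via real rank zero approximations. Since \(x\) need not have projection spectral projections, I would first treat finite spectrum elements, which the paper shows give step functions. I would then pass to general positive elements by norm density of finite-spectrum elements in \(A^+\) (real rank zero) together with the Lipschitz estimate (4). For finite spectrum \(x=\sum\lambda_i e_i\), the lower bound \(\norm{x(1-p)}\ge\lambda_k\) when \([p]_0<[e_1+\dots+e_k]_0\) follows from cancellation. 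If \(\norm{x(1-p)}<\lambda_k\), then \(e_1+\dots+e_k\) is equivalent to a subprojection of \(p\); this uses a compression argument, with \((1-p)\) restricted to \(e_{\le k}\) small, so that \(e_{\le k}\precsim p\). With the spectral formula, (6) follows because \(aa^*\) and \(a^*a\) have Murray--von Neumann equivalent spectral projections away from zero, via polar decomposition at approximation level. Then (5) gives \(s(a)=s(\abs{a})=s(\abs{a^*})=s(a^*)\). Item (9) follows from the spectral formula since \(\chi_{(f(t),\infty)}(f(x))=\chi_{(t,\infty)}(x)\) for increasing \(f\); I would approximate by finite spectrum elements and continuity of \(f\), with care at plateaus of \(f\). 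For (10), \(0\le a\le b\) gives \(\norm{a^{1/2}(1-p)}^2=\norm{(1-p)a(1-p)}\le\norm{(1-p)b(1-p)}\), so \(s(a^{1/2})\le s(b^{1/2})\); then apply (9) with \(f(t)=t^2\).
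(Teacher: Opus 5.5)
\textbf{What is fine, and where your route differs.} Your treatment of (1)--(5), (9), (10), and of the second half of (3) via (6), follows the paper. For (7) and (8) your route is different from the paper's and lighter. It works once you state the mechanism precisely; note that ``spectral projections of \(p+q\)'' need not exist in \(A\).

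For (7), real rank zero gives a projection \(r\in\overline{(p+q)A(p+q)}\) with \(\norm{(p+q)(1-r)}\) small. Then \(r\lesssim p+q\lesssim p\oplus q\) gives \([r]_0\le g+h\).

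For (8), take a projection \(e\) from an approximate unit of the hereditary subalgebra generated by \(x=(1-q)b^*pb(1-q)\). It is exactly orthogonal to \(q\), and \(e\lesssim x\sim pb(1-q)b^*p\lesssim p\). So \(r=q+e\) satisfies \([r]_0\le g+h\) and \(\norm{ab(1-r)}\le\norm{a(1-p)}\norm{b(1-q)}+\norm{a}\norm{x(1-e)}^{1/2}\).

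The paper instead merges two projections into one of class exactly \([p\oplus q]_0\) (Lemma~\ref{approx:Proj2ByProj1}). That costs cancellation, \(K_0(A)^+=D_0(A)\) and density of invertibles, whereas your version needs only real rank zero.

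\textbf{The gap is (6)}, the step you yourself call the crux. Your spectral formula has content only for finite-spectrum elements. For general \(a\), neither \(a^*a\) nor \(aa^*\) need have spectral projections in \(A\). ``Passing to general positive elements by density and (4)'' only yields \(s(a^*a)=\lim s(x_n)\) and \(s(aa^*)=\lim s(y_n)\) for \emph{independently chosen} finite-spectrum approximants. Nothing links the spectral projections of \(x_n\) to those of \(y_n\). So the sentence ``\(aa^*\) and \(a^*a\) have Murray--von Neumann equivalent spectral projections away from zero, via polar decomposition at approximation level'' is the missing argument itself, not a consequence of what precedes it.

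What is needed is an approximation of \(a\) itself by elements whose polar part is available in \(\tilde A\). The paper notes that real rank zero plus cancellation gives stable rank one, so \(a\) is a norm limit of invertibles \(\tilde a\in\tilde A^{-1}\). For these, \(\tilde a=u\abs{\tilde a}\) with \(u\) unitary. Two-sided unitary invariance of \(s\), extended to \(\tilde A\) or \(M(A)\), gives \(s(\tilde a)=s(\abs{\tilde a})=s(\abs{\tilde a}u^*)=s(\tilde a^*)\), and (4) finishes.

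A fix closer to your plan: let \(v\) be the polar part of \(a\) in \(A^{**}\). Approximate \(a\) within \(2\delta\) by \(vx\), where \(x\) is a finite-spectrum positive element of \(\overline{(\abs{a}-\delta)_+A(\abs{a}-\delta)_+}\). Then \(vx\in A\), and \(\abs{vx}=x\) and \(\abs{(vx)^*}=vxv^*\) have Murray--von Neumann equivalent spectral projections. The step-function formula then gives \(s(vx)=s((vx)^*)\).

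\textbf{Two smaller slips.} First, \(s_g(a)^2=s_g(a^*a)\) does not follow from \(\norm{a-ap}^2=\norm{(1-p)a^*a(1-p)}\). The quantity \(s_g(a^*a)\) is built from \(\norm{a^*a(1-p)}\), which can strictly exceed \(\norm{a(1-p)}^2\). Only ``\(\le\)'' is immediate; equality is the case \(f(t)=t^2\) of (9) and needs the real-rank-zero approximation. This matters if you use it to pass between \(a^*a\) and \(\abs{a}\) in (6).

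Second, in your reformulation of \(s_g\), knowing that \(p\) is equivalent to a subprojection of \(q\) does not produce a projection of class \(g\) dominating \(p\) itself. That is what the monotonicity argument requires. Since you never use the reformulation afterwards, simply drop it.
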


The inequalities in (7) and (8) are analogues of Ky Fan's inequalities \cite{KyFan:1951:inequalities_for_eigenvalues}.
Some of these properties hold under weaker assumptions on the C\(^*\)-algebra.
See 
Proposition~\ref{SVF:Basic_Property:1}, 
Corollary~\ref{SVF:Basic_Property:2},
Theorem~\ref{SVF:Basic_Property:3},
Theorem~\ref{SVF:Basic_Property:4},
Proposition~\ref{SVF:Basic_Property:5}
and Proposition~\ref{SVF:Basic_Property:6}
for the proofs.

While the equivalence classes of projections in \(\mathcal{K}\) and semifinite von Neumann algebras form a lattice as an ordered set, 
the positive cone of the \(K_0\)-group of a C\(^*\)-algebra does not necessarily have a lattice structure. 
This causes essential difficulties in the proof. 
To overcome this issue, we employ approximation arguments and the theory of comparison of positive elements as in \cite{Cuntz:1978:Dimension_functions_on_simple_C*-algebras,Kirchberg.Rordam:2000:non-simple.purely.infinite.C*-algebras}.

We also study the right-continuity of singular value functions for C\(^*\)-algebras.
In the case of semifinite von Neumann algebras with faithful traces, 
singular value functions are right-continuous decreasing functions on \(\mathbb{R}^+\).
In contrast, 
since \(K_0(A)^+\) is not a priori equipped with a topology, 
it is not always possible to define a notion of ``right-continuity'' for functions on \(K_0(A)^+\).
On the other hand, if \(K_0(A)^+\) is totally ordered and Archimedean, it can be identified with a subset of \(\mathbb{R}^+\), and hence endowed with a natural topology.
Under these assumptions, we obtain the following theorem.

\begin{mainthm}[Theorem~\ref{SVF:form:simple_and_totally_ordered}]
  Let \(A\) be a C\(^*\)-algebra with cancellation such that the ordered \(K_0\)-group of \(A\) is simple and totally ordered.
  If \(A\) is unital (respectively stable), then the following hold:
  \begin{enumerate}
    \item Let \(f\colon K_0(A)^+ \to \mathbb{R}^+\) be a decreasing, right-continuous function such that \(DP(\overline{f})\subset K_0(A)^+\) and \(f([1_A]_0)=0\) (respectively vanishes at infinity). 
    There exists \(a \in A\) such that \(f = s(a)\).
    \item If \(A\) has real rank zero, 
    then for every \(a\in A\), 
    the singular value function \(s(a)\) is right-continuous, vanishes at infinity, and \(DP(\overline{s(a)})\subset K_0(A)^+\).
  \end{enumerate}
\end{mainthm}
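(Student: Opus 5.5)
We identify \(K_0(A)^+\) with a subset \(G^+\subset\mathbb{R}^+\) via the unique order embedding. Since \(K_0(A)\) is simple and totally ordered it is Archimedean, so Hölder's theorem gives an order-preserving injective homomorphism \(\varphi\colon K_0(A)\to\mathbb{R}\). We normalize \(\varphi([1_A]_0)=1\) in the unital case. The topology, right-continuity, the extension \(\overline{f}\) and the discontinuity set \(DP(\overline f)\) are all read through \(\varphi\). Cancellation makes \([p]_0\le[q]_0\) equivalent to \(p\) being Murray--von Neumann equivalent to a subprojection of \(q\) (after stabilizing in the stable case). It also makes the dimension range hereditary in the relevant sense: every \(g\le[1]_0\) (unital), resp. every \(g\in K_0(A)^+\) (stable), is \([p]_0\) for some projection \(p\) of \(A\).

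\emph{Part (1).} Since \(f\) is decreasing and right-continuous with all jumps located at points of \(G^+\), we approximate \(f\) from above by step functions. Let the jump points be \(g_1<g_2<\cdots\), with \(g_n\to[1]_0\) (unital) or \(g_n\to\infty\) (stable), possibly accumulating. We choose an increasing chain of projections \(p_1\le p_2\le\cdots\) with \([p_n]_0=g_n\). This uses cancellation: given \(p_n\) and \(g_{n+1}-g_n\in K_0(A)^+\), we find a projection \(q\perp p_n\) with \([q]_0=g_{n+1}-g_n\) inside \(1-p_n\), resp. inside a corner of the stable algebra.

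We then set \(a=\sum_n f(g_{n}^{-})\,(p_n-p_{n-1})\) plus a correction for any continuous part. If \(f\) has continuous decreasing parts, the step construction is not enough. In that case we take a dense countable set of points of \(G^+\) and build an increasing chain of projections indexed by it. We then define \(a\) as a norm limit of finite-spectrum positive elements \(a_k\) whose singular value functions are step functions (the step-function result from the introduction). Each \(a_k\) is chosen so that \(\sup|s(a_k)-f|\to0\). By the Lipschitz estimate (4), whose proof should only need the definition, \(s(a_k)\to s(a)\) uniformly, so \(s(a)=f\).

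Convergence in norm follows because \(f\) vanishes at \([1]_0\) or at infinity. In the stable case this makes the series converge. In the unital case the tail below level \(\varepsilon\) has norm \(\le\varepsilon\). Right-continuity of \(f\) is exactly what lets \(s_g(a_k)\) equal the right value \(f(g)\) at jump points, rather than the left limit.

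\emph{Part (2).} Assume real rank zero and fix \(a\in A\). By (5) we may take \(a\ge0\), and by (4) we may approximate \(a\) in norm by positive elements with finite spectrum, which exist by real rank zero. For these approximants the step-function description gives right-continuity, vanishing at infinity (resp. at \([1]_0\)), and jumps in \(G^+\).

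Vanishing at infinity for \(a\) itself follows from uniform approximation. In the stable case we also use that \(a\) is approximately \(a p\) for a projection \(p\), which holds since real rank zero gives an approximate unit of projections. Right-continuity does not pass to uniform limits automatically, but it does for decreasing functions once we show upper semicontinuity from the right directly. Concretely, if \(g_k\downarrow g\) and \([p]_0\le g\) with \(\|a-ap\|<s_g(a)+\varepsilon\), then \(s_{g_k}(a)\le s_g(a)+\varepsilon\) by monotonicity, and combined with the decrease of \(s\) this gives right-continuity. So right-continuity is essentially free.

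The real issue is \(DP(\overline{s(a)})\subset K_0(A)^+\): jump points of a uniform limit of step functions. A jump of size \(\delta\) of \(s(a)\) at \(x\) forces jumps of size about \(\delta\) of \(s(a_k)\) near \(x\), located at points \(x_k\in G^+\) converging to \(x\). We must show \(x\in G^+\). I would try to realize \(x\) as \([p]_0\) for \(p\) a spectral projection of \(a\) at a gap. Real rank zero lets us pick \(\lambda\) inside the jump interval \((s_{x}(a),s_{x^-}(a))\) such that \(\chi_{(\lambda,\infty)}(a)\) can be approximated by projections in the hereditary subalgebra. A Cuntz-comparison argument should then give \(\varphi([p]_0)=x\). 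This last identification is the main obstacle.
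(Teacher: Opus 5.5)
There is a genuine gap in part (2), at the inclusion \(DP(\overline{s(a)})\subset K_0(A)^+\). You leave it as ``the main obstacle'' and propose to realize the jump point \(x\) as the class of a spectral-type projection of \(a\) via Cuntz comparison, but you do not carry this out. No such identification is needed. Your heuristic that a jump of \(s(a)\) at \(x\) only forces jumps of the approximants \emph{near} \(x\) is wrong: uniform convergence forces a jump at exactly \(x\). The quantities \(\overline f(x)\) and \(\overline f(x-)\) are values, suprema or infima of \(f\) over subsets of \(K_0(A)^+\) that depend only on \(x\). So replacing \(f\) by \(f_n\) moves each of them by at most \(\norm{f-f_n}\), and hence
\[
\overline{f_n}(x-)-\overline{f_n}(x)\geq\overline{f}(x-)-\overline{f}(x)-2\norm{f-f_n}.
\]
Take \(f=s(a)\) and \(f_n=s(a_n)\) with \(a_n\geq0\) of finite spectrum and \(\norm{a-a_n}\) smaller than half the jump. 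Then Proposition~\ref{SVF:Basic_Property:1}~(4) makes the right-hand side positive, so \(x\in DP(\overline{s(a_n)})\). By Proposition~\ref{SVF:Example:positive_elements_with_finite_spectrum}, the left-jump points of \(\overline{s(a_n)}\) are among the classes \([\hat p_k]_0\in K_0(A)^+\). This is the paper's Lemma~\ref{uniform_limit:RC:DP}, and it closes the argument using the approximants you already have.

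Your right-continuity argument is also invalid. For \(g_k\downarrow g\), the bound \(s_{g_k}(a)\leq s_g(a)+\varepsilon\) is just monotonicity and uses nothing about \(a\). The decreasing function \(\chi_{[0,x_0]}\) satisfies it and is not right-continuous at \(x_0\). What must be shown is \(\lim_{h\downarrow g}s_h(a)\geq s_g(a)\). Contrary to your remark, right-continuity does pass to uniform limits by an \(\varepsilon/3\) argument, and that is how the paper obtains it from the finite-spectrum approximants.

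In part (1), your final scheme is the paper's: finite-spectrum \(a_k\) with \(s(a_k)\to f\) uniformly, a norm limit, and the Lipschitz estimate. However, the \(a_k\) do not converge because \(f\) vanishes at \([1_A]_0\) or at infinity; they are not partial sums of one series. Two ingredients are needed:
\begin{enumerate}
\item Each \(a_{k+1}\) must be built on a chain of projections refining the chain used for \(a_k\); your chain indexed by a countable dense set could provide this. This yields \(\norm{a_k-a_{k+1}}\leq\norm{g^f_{F_k}-f}\), as in \eqref{difference_of_operators_gen_by_steps}.
\item You need partitions \(F_k\subset K_0(A)^+\) with \(\norm{g^f_{F_k}-f}\to0\). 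Their existence (Lemma~\ref{function:real:real:RCDappdbystepfunc}) is precisely where right-continuity and \(DP(\overline f)\subset K_0(A)^+\) are used.
\end{enumerate}
Note also that the jump points of \(f\) cannot in general be listed as \(g_1<g_2<\cdots\).
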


We show that 
if \(K_0(A)\) is not totally ordered, then
singular value functions are not necessarily lower semicontinuous 
with respect to a product topology on \(K_0(A)\) induced by 
the discrete topology on the subgroup of infinitesimal
and the topology of \(\mathrm{Aff}(S(K_0(A)))\).

This paper is organized as follows. 
In Section~\ref{section:Preliminaries}, 
we collect notation and several preliminary lemmas. 
In Section~\ref{section:SVF}, 
we introduce the definition of the singular value functions for C\(^*\)-algebras and establish their basic properties.
In Section~\ref{section:realization-continuity}, 
we show that certain types of functions on the positive cone of a \(K_0\)-group are realized by singular value functions. 

\section{Preliminaries}

\label{section:Preliminaries}

In this section we introduce notation and present some lemmas.

\subsection{Notation}

In this paper, 
\(\mathbb{N}\) contains \(0\) 
and \(\mathbb{R}^+:=[0,\infty)\).

Let \(a\) and \(b\) be elements in a C\(^*\)-algebra \(A\). 
We write \(a\oplus b\) for the diagonal matrix \(\operatorname{diag}(a,b)\in M_2(A)\).
The unitization of \(A\) is denoted by \(\tilde{A}\). 
If \(A\) is unital, then \(\tilde{A}=A\), unless otherwise specified. 
For a unital C\(^*\)-algebra \(A\),
we write \(1_A\) for the unit of \(A\), 
\(U(A)\) for the unitary group of \(A\), 
and \(A^{-1}\) for the set of invertible elements of \(A\).
We denote by \(M(A)\) the multiplier algebra of \(A\).

We denote by \(A^+\) the set of positive elements in a C\(^*\)-algebra \(A\).
For \(a\in A^+\) and \(\varepsilon>0\), we denote by \((a-\varepsilon)_+\) the positive part of the self-adjoint element \(a-\varepsilon1_{\tilde{A}}\) in \(\tilde{A}\).
We denote by \(\overline{aAa}\) the hereditary sub-C\(^*\)-algebra of \(A\) generated by \(a\).
For positive elements \(a,b\) in \(A\), 
\(a\sim_{\mathrm{MvN}}b\) denotes that \(a\) and \(b\) are Murray--von Neumann equivalent.
We say \(a\) is \textit{Cuntz sub-equivalent} to \(b\) if 
there exists a sequence \(\{x_k\}_{k=1}^\infty\) in \(A\) such that \(x_k^*bx_k\) converges to \(a\), writing \(a\lesssim b\).
The notion of Cuntz sub-equivalence was introduced by Joachim Cuntz in \cite{Cuntz:1978:Dimension_functions_on_simple_C*-algebras}.
If \(a\sim_{\mathrm{MvN}}b\), then clearly \(a\lesssim b\) and \(b\lesssim a\).

We denote by \(P(A)\) the set of projections in \(A\). 
For projections \(p,q\) in \(P(A)\), 
we say that \(p\) is a \textit{subprojection} of \(q\) if \(p\leq q\), that is, \(pq=qp=p\).
Moreover, \(p\) is said to be \textit{subordinate} to \(q\), or \textit{majored} by \(q\), 
if there exists a projection \(p'\in P(A)\) such that \(p'\sim_{\mathrm{MvN}}p\) and \(p'\leq q\). 
It is well known that \(p\) is Cuntz sub-equivalent to \(q\) if and only if \(p\) is subordinate to \(q\). 
Thus we use the notation \(\lesssim\) for both relations.
The dimension range of \(A\) is denoted by 
\begin{equation*}
  D_0(A):=\{[p]_0\mid p\in P(A)\}.
\end{equation*}
We say that \(A\) has the cancellation property, or simply has cancellation, 
if for every \(p,q\in P_\infty(A):=\bigcup_{n\in\mathbb{N}_{\geq 1}} P(M_n(A))\),
\begin{equation*}
  [p]_0=[q]_0\iff p\sim_{\mathrm{MvN}}q.
\end{equation*}

Let \(a\) be a positive element of a C\(^*\)-algebra \(A\) with finite spectrum.
By functional calculus, 
there exist a positive integer \(n\), mutually orthogonal nonzero projections
\(p_1,p_2,\dots,p_n \in A\) 
and 
positive real numbers
\(\alpha_0\geq\alpha_1\geq\cdots\geq\alpha_{n-1}\)
such that 
\begin{equation*}
  a=\sum_{i=1}^{n}\alpha_{i-1}p_i.
\end{equation*}

For mutually orthogonal nonzero projections \(p_1,p_2,\dots,p_n\) in \(A\),
we define an increasing sequence of projections \(\{\hat{p}_k\}_{k=0}^{n}\) by
\begin{equation*}
  \hat{p}_k:=\sum_{i=0}^{k}p_i,
\end{equation*}
where \(p_0\) is the zero projection. 

We denote by \(\mathcal{K}\) the C\(^*\)-algebra of compact operators on an infinite-dimensional separable Hilbert space.
We denote by \(T(A)\) the set of tracial states on \(A\).

\begin{lemma}
  \label{subordinate:SC}
  Let \(p\) and \(q\) be projections in a C\(^*\)-algebra \(A\).
  If \(\norm{p-pq}<1\), then \(p\lesssim q\).
\end{lemma}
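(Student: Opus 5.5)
We prove \(p\lesssim q\) by producing a partial isometry \(v\in A\) with \(v^*v=p\) and \(vv^*\le q\); then \(vv^*\) is a subprojection of \(q\) that is Murray--von Neumann equivalent to \(p\), so \(p\) is subordinate to \(q\).

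The starting point is the compression \(x:=pqp\in pAp\), a positive element of the corner algebra \(pAp\), whose unit is \(p\). Since \(\norm{p-pq}<1\), we get
\begin{equation*}
  \norm{p-x}=\norm{p(p-pq)^*}\cdot 1 = \norm{(p-pq)(p-pq)^*} = \norm{p-pq}^2<1 .
\end{equation*}
To check the middle equality, expand \((p-pq)(p-pq)^*=(p-pq)(p-qp)=p-pqp-pqp+pqqp=p-pqp\). Hence \(\norm{p-x}=\norm{p-pq}^2<1\). By the Neumann series, \(x\) is invertible in the unital C\(^*\)-algebra \(pAp\); note that \(x\ge 0\) there. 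Consequently \(x^{-1/2}\in pAp\) is well defined by functional calculus in \(pAp\).

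Now set \(v:=qpx^{-1/2}\in A\). First,
\begin{equation*}
  v^*v=x^{-1/2}pqqpx^{-1/2}=x^{-1/2}xx^{-1/2}=p,
\end{equation*}
so \(v\) is a partial isometry and \(vv^*\) is a projection. Second, \(qv=v\), so
\begin{equation*}
  vv^*=qvv^*q\le q\norm{vv^*}q=q,
\end{equation*}
and a projection dominated by \(q\) in this way is a subprojection of \(q\); in fact \(q(vv^*)=vv^*=(vv^*)q\) directly. Therefore \(p\sim_{\mathrm{MvN}}vv^*\le q\), i.e.\ \(p\) is subordinate to \(q\), and by the equivalence recalled in the preliminaries this means \(p\lesssim q\).

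The degenerate case \(p=0\) is trivial. The only point needing care is that invertibility and the inverse square root are taken in \(pAp\) and not in \(A\): one must check that \(x^{-1/2}\) lies in \(pAp\), so that \(x^{-1/2}p=x^{-1/2}\). No earlier results of the paper are needed beyond the standard identification of Cuntz subequivalence with subordination for projections.
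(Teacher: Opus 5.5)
Your proof is correct and is essentially the paper's argument: invert \(pqp\) in the corner \(pAp\) and take the partial isometry \(qp(pqp)^{-1/2}\) (the paper's \(u\), with \(u^*=b^{1/2}pq\)), whose source projection is \(p\) and whose range projection lies under \(q\). The only difference is cosmetic: you compute \(\norm{p-pqp}=\norm{p-pq}^2\) exactly via the C\(^*\)-identity, whereas the paper only uses the estimate \(\norm{p-pqp}\leq\norm{p-pq}\).
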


\begin{proof}
  This lemma is well-known, but we give a proof for the reader's convenience.
  Since \(\norm{p-pqp}\leq\norm{p-pq}<1\),  
  it follows from the Neumann series
  that \(pqp\) is an invertible element in the unital C\(^*\)-subalgebra \(pAp\). 
  Let \(b\) be the inverse of \(pqp\) in \(pAp\). 
  As \(pqp\) is positive, so is \(b\).  
  Set \(u^*:=b^{1/2}pq\). 
  Since \(uu^*=qpb pq\in P(A)\), 
  we compute
  \begin{align*}
    u^*u&=b^{1/2}pqq pb^{1/2}=p,\\
    uu^*&=qpb pq\leq q\norm{qpbpq}q=q.
  \end{align*}
  Hence we have \(p \lesssim q\).
\end{proof}

\begin{lemma}
  \label{Cuntz-subeq2subproj}
  Let \(A\) be a C\(^*\)-algebra with cancellation. 
  \begin{enumerate}
    \item If \(p_1, q, p_2 \in P(A)\) satisfy \(p_1 \leq p_2\) and 
    \(p_1 \lesssim q \lesssim p_2\) in \(A\), 
    then there exists a projection \(q'\) in \(A\) with \(q'\sim_{\mathrm{MvN}}q\) and \(p_1\leq q'\leq p_2\).
    \item Let \(\{p_n\}_{n=1}^{N}\) and \(\{q_m\}_{m=1}^{M}\) be increasing sequences in \(P(A)\).
    If there exists \(1\leq m_1<\cdots< m_N=M\) with \(p_n\sim_{\mathrm{MvN}}q_{m_n}\) for all \(1\leq n\leq N\), 
    then there exists an increasing sequence \(\{r_m\}_{m=1}^{M}\) in \(P(A)\)
    such that 
    \begin{equation*}
      r_m\sim_{\mathrm{MvN}} q_m\quad\text{for all \(1\leq m\leq M\)},
    \end{equation*}
    and
    \begin{equation*}
      r_{m_n}=p_n\quad\text{for all \(1\leq n\leq N\)}.
    \end{equation*} 
  \end{enumerate}
\end{lemma}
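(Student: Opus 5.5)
For part (1), I would first turn the two subordinations into concrete subprojections. Since \(q\lesssim p_2\), there is a projection \(q_0\sim_{\mathrm{MvN}}q\) with \(q_0\leq p_2\). Since \(p_1\lesssim q\sim_{\mathrm{MvN}} q_0\), there is also a projection \(p_1'\sim_{\mathrm{MvN}}p_1\) with \(p_1'\leq q_0\).

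The aim is then to move \(p_1'\) onto \(p_1\) inside \(p_2\), carrying \(q_0\) along. Both \(p_1\) and \(p_1'\) are subprojections of \(p_2\), so I would compare their complements \(p_2-p_1\) and \(p_2-p_1'\). In \(K_0(A)\) we have \([p_2-p_1]_0=[p_2]_0-[p_1]_0=[p_2]_0-[p_1']_0=[p_2-p_1']_0\). Cancellation then gives \(p_2-p_1\sim_{\mathrm{MvN}}p_2-p_1'\) via some partial isometry \(w\). Let \(v\) be the partial isometry with \(v^*v=p_1'\) and \(vv^*=p_1\). Set \(u:=v+w\). Because \(p_1'\perp p_2-p_1'\) and \(p_1\perp p_2-p_1\), \(u\) is a partial isometry with \(u^*u=uu^*=p_2\), i.e. a unitary in \(p_2Ap_2\).

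Define \(q':=uq_0u^*\). Then \(q'\sim_{\mathrm{MvN}}q_0\sim_{\mathrm{MvN}}q\), and \(q'\leq up_2u^*=p_2\). Also \(up_1'u^*=vp_1'v^*=p_1\), so \(p_1'\leq q_0\) gives \(p_1\leq q'\). The only delicate point is checking that \(u\) is a genuine partial isometry. This reduces to the cross terms \(v^*w=0\) and \(w^*v=0\), which follow from the orthogonality of the ranges and sources. In the nonunital case everything stays inside \(p_2Ap_2\), so no unit is needed.

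For part (2), I would construct the \(r_m\) block by block, with \(m_0:=0\) and \(p_0:=0\), using part (1) repeatedly. First set \(r_{m_n}:=p_n\) for every \(n\). The task is then to fill in \(r_m\) for \(m_{n-1}<m<m_n\) increasingly between \(p_{n-1}\) and \(p_n\), with each \(r_m\sim_{\mathrm{MvN}}q_m\).

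Within block \(n\), I would proceed by induction on \(m\). Suppose \(r_{m-1}\) has been chosen with \(r_{m-1}\sim_{\mathrm{MvN}}q_{m-1}\) and \(r_{m-1}\leq p_n\), where the base case is \(r_{m_{n-1}}=p_{n-1}\sim_{\mathrm{MvN}} q_{m_{n-1}}\). Since \(q_{m-1}\leq q_m\leq q_{m_n}\), we get \(r_{m-1}\lesssim q_m\lesssim p_n\). Applying part (1) to \(r_{m-1}\leq p_n\) gives \(r_m\sim_{\mathrm{MvN}}q_m\) with \(r_{m-1}\leq r_m\leq p_n\). This step uses only subordination, which is transitive and respects MvN equivalence. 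Continuing up to \(m=m_n\), where we keep \(r_{m_n}=p_n\), the resulting sequence is increasing and satisfies both required conditions. Part (1) is the only place where cancellation enters, and the main obstacle is the unitary construction there.
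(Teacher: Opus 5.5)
Your proposal is correct and follows the paper's proof essentially step for step. In (1) you choose \(q_0\le p_2\) and \(p_1'\le q_0\), apply cancellation to the complements \(p_2-p_1\) and \(p_2-p_1'\), and conjugate by a unitary of \(p_2Ap_2\); you build that unitary explicitly as \(v+w\), where the paper cites a standard result. In (2) you run the same block-wise induction as the paper, applying (1) to \(r_{m-1}\lesssim q_m\lesssim p_n\) and fixing \(r_{m_n}=p_n\).
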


\begin{proof}
  (1)
  There exists \(q''\) in \(P(A)\) with \(q\sim_{\mathrm{MvN}}q''\leq p_2\),
  and there exists \(p'\in P(A)\) such that \(p_1\sim_{\mathrm{MvN}}p'\leq q''\leq p_2\).
  By \([p_1]_0=[p']_0\) and \(p_1,p'\leq p_2\), we have
  \begin{equation*}
    [p_2]_0=[(p_2-p_1)\oplus p_1]_0=[(p_2-p')\oplus p']_0.
  \end{equation*}
  Therefore, \([p_2-p_1]_0=[p_2-p']_0\).
  Since \(A\) has cancellation,
  it follows that \(p_2-p_1\sim_{\mathrm{MvN}} p_2-p'\).
  By \cite[Proposition 2.2.2]{Rordam.Larsen.Laustsen},
  there exists a unitary element \(u\) in a unital sub-C\(^*\)-algebra \(p_2Ap_2\) of \(A\) 
  such that \(p_1=u^*p'u\). 
  We define a projection \(q':=u^*q''u\) in \(A\). 
  Since \(p'\leq q''\), we have \(p_1\leq q'\leq p_2\) and clearly \(q'\sim_{\mathrm{MvN}}q\).

  (2) 
  Put \(m_0:=0,\;p_0:=0\) and \(r_0:=0\).
  We construct \(r_i\) by induction on \(i\).

  Suppose that increasing sequence \(\{r_m\}_{m=0}^{i-1}\) is constructed and 
  \begin{equation*}
    r_j\leq p_n\quad\text{for all \(1\leq j \leq m_n,\;1\leq n\leq N\)}.
  \end{equation*}
  Let \(n\) be the unique integer such that \(m_{n-1}<i\leq m_n\).
  Assume inductively that
  \begin{equation*}
    p_{n-1}\leq r_{i-1}\leq p_n\quad\text{if \(m_{n-1}<i\leq m_n\)}.
  \end{equation*} 
  If \(i=m_n\), set \(r_i:=p_n\).
  Otherwise, 
  applying (1) to \(r_{i-1}\lesssim q_i\lesssim p_n\) and \(r_{i-1}\leq p_n\), 
  there exists \(r\in P(A)\) with \(r\sim_{\mathrm{MvN}}q_i\) and \(r_{i-1}\leq r\leq p_n\).
  We set \(r_i:= r\).
  Thus we obtain \(r_i\) with \(r_i\sim_{\mathrm{MvN}}q_i\) and \(r_i\leq p_n\).
  This completes the induction.
\end{proof}

\begin{lemma}
  \label{projection:depart:sequence}
  Let \(A\) be a C\(^*\)-algebra with cancellation.
  \begin{enumerate}
    \item Let \(p\) be a projection in \(A\) and let \(\{g_n\}_{n=1}^{N}\) be an increasing sequence in \(K_0(A)^+\). 
    If \(g_N=[p]_0\), 
    then there exists an increasing sequence \(\{p_n\}_{n=1}^{N}\) in \(P(A)\) such that \([p_n]_0=g_n\) for \(1\leq n\leq N\) and \(p_N=p\).
    \item Let \(\{p_n\}_{n=1}^{N}\) be an increasing sequence in \(P(A)\) 
    and \(\{g_m\}_{m=1}^{M}\) be an increasing sequence in \(K_0(A)^+\)
    such that there exists an increasing sequence \(\{m_n\}_{n=1}^{N}\) with
    \begin{equation*}
      g_{m_n}=[p_n]_0 \text{ for all \(1\leq n\leq N\)}\quad\text{and}\quad m_N=M.
    \end{equation*}
    There exists an increasing sequence \(\{q_m\}_{m=1}^{M}\) in \(P(A)\) such that 
    \begin{equation*}
      [q_m]_0=g_m \text{ for all \(1\leq m\leq M\)},
    \end{equation*}
    and 
    \begin{equation*}
      p_{n}=q_{m_n} \text{ for all \(1\leq n\leq N\)}.
    \end{equation*}
  \end{enumerate}
\end{lemma}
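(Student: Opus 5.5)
For (1) we argue by downward induction on \(n\), constructing \(p_N:=p\) first and then \(p_{N-1},p_{N-2},\dots,p_1\) with \(p_{n}\leq p_{n+1}\) and \([p_n]_0=g_n\).

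Suppose \(p_{n+1}\) has been constructed with \([p_{n+1}]_0=g_{n+1}\). Since \(g_n\leq g_{n+1}\), the difference \(g_{n+1}-g_n\) lies in \(K_0(A)^+\). Choose projections \(e,f\in P_\infty(A)\) with \([e]_0=g_n\) and \([f]_0=g_{n+1}-g_n\). These may live in some \(M_k(A)\). Then
\begin{equation*}
  [e\oplus f]_0=g_{n+1}=[p_{n+1}]_0 ,
\end{equation*}
where \(p_{n+1}\) is regarded in \(M_{2k}(A)\) via the upper-left corner. By cancellation, \(e\oplus f\sim_{\mathrm{MvN}}p_{n+1}\). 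Let \(v\) be a partial isometry with \(v^*v=e\oplus f\) and \(vv^*=p_{n+1}\), and put
\begin{equation*}
  p_n:=v(e\oplus 0)v^* .
\end{equation*}
This is a projection with \(p_n\leq vv^*=p_{n+1}\). Since \(p_{n+1}\in A\) and \(p_n=p_{n+1}p_np_{n+1}\), it lies in the corner \(p_{n+1}M_{2k}(A)p_{n+1}=p_{n+1}Ap_{n+1}\subset A\). Moreover \([p_n]_0=[e]_0=g_n\). This completes the induction.

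The only point requiring care in this step is that \(p_n\) lands in \(A\) itself rather than merely in a matrix algebra over \(A\). This is exactly what the identification of the corner handles, because \(p_{n+1}\) sits in the \((1,1)\)-entry.

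For (2) we reduce to (1) blockwise, using Lemma~\ref{Cuntz-subeq2subproj}(2) as the assembling tool. First apply (1) to \(p:=p_N\) and the increasing sequence \(\{g_m\}_{m=1}^{M}\); since \(g_M=[p_N]_0\), this yields an increasing sequence \(\{q'_m\}_{m=1}^{M}\) in \(P(A)\) with \([q'_m]_0=g_m\) for all \(m\). In particular
\begin{equation*}
  [q'_{m_n}]_0=g_{m_n}=[p_n]_0 ,
\end{equation*}
so cancellation gives \(p_n\sim_{\mathrm{MvN}}q'_{m_n}\) for every \(1\leq n\leq N\). Now apply Lemma~\ref{Cuntz-subeq2subproj}(2) to the increasing sequences \(\{p_n\}_{n=1}^{N}\) and \(\{q'_m\}_{m=1}^{M}\), with the indices \(m_1<\cdots<m_N=M\). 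It produces an increasing sequence \(\{q_m\}_{m=1}^{M}\) with \(q_m\sim_{\mathrm{MvN}}q'_m\), hence \([q_m]_0=g_m\), and with \(q_{m_n}=p_n\).

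The main obstacle in (2) is the indexing hypothesis of Lemma~\ref{Cuntz-subeq2subproj}(2), which requires strictly increasing \(m_n\) with \(m_N=M\). If the given sequence \(\{m_n\}\) is only weakly increasing, repeated indices force \(p_n=p_{n'}\) up to equivalence, and we first delete these duplicates. Here we use that \(p_n\leq p_{n'}\) together with \([p_n]_0=[p_{n'}]_0\) gives \([p_{n'}-p_n]_0=0\). Under cancellation this makes \(p_{n'}-p_n\sim_{\mathrm{MvN}}0\), so \(p_n=p_{n'}\).
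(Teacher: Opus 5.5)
Your proposal is correct and follows the paper's route. Part (2) is exactly the paper's argument: apply (1) to \(p_N\), use cancellation to get \(q'_{m_n}\sim_{\mathrm{MvN}}p_n\), then invoke Lemma~\ref{Cuntz-subeq2subproj}(2), and your remark that a repeated index \(m_n=m_{n'}\) forces \(p_n=p_{n'}\) under cancellation is also correct. For part (1) the paper simply cites \cite[Lemma~3.2~{(1)}]{Rordam:2004:Stable_C*-algebras} repeatedly, and your downward induction (cancellation, then compressing by the partial isometry \(v\) into the corner \(p_{n+1}Ap_{n+1}\)) is a valid self-contained proof of that cited step.
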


\begin{proof}
  (1)
  By applying \cite[Lemma~3.2~{(1)}]{Rordam:2004:Stable_C*-algebras} repeatedly, 
  we obtain an increasing sequence with the required properties.

  \medbreak

  (2)
  Applying (1) to \(\{g_m\}_{m=1}^{M-1}\) and \(g_M=[p_N]_0\), 
  we obtain an increasing sequence \(\{q'_m\}\) of projections with \([q'_m]_0=g_m\) for all \(1\leq m\leq M\) and \([q'_{m_n}]_0=[p_n]_0\) for all \(1\leq n\leq N\).
  Since \(A\) has cancellation, 
  \begin{equation*}
    q'_{m_n}\sim_{\mathrm{MvN}}p_n\text{ for all \(1\leq n\leq N\)}.
  \end{equation*}
  Applying Lemma~\ref{Cuntz-subeq2subproj}~(2) to \(\{p_n\}\) and \(\{q'_m\}\),
  we obtain the required sequence.
\end{proof}

\subsection{Step function}

Let \(S\) be a subset of \(\mathbb{R}^+\) containing \(0\).
A function \(f\) on \(S\) is said to be \textit{right-continuous}
if \(f\) is right-continuous with respect to the relative topology on \(S\).

Let \(F\) be a finite subset of \(S\) containing \(0\).
Write the elements of \(F\) as
\begin{equation*}
  0=x_0<x_1<\cdots<x_{\abs{F}-1}.
\end{equation*}
For a decreasing function \(f\colon S\to\mathbb{R}^+\), 
we define a decreasing right-continuous step function \(g^{f}_{F}\) on \(S\) by
\begin{equation*}
  g_{F}^{f}:=\sum_{i=1}^{\abs{F}-1}f(x_{i-1})\chi_{[x_{i-1},x_{i})\cap S}.
\end{equation*}
We set \(g_{F}^{f}=0\) on \([x_{\abs{F}-1},\infty)\cap S\).
The sup norm of the difference can be written as follows:
\begin{equation*}
  \begin{split}
    \norm{g_{F}^{f}-f}
    &=
    \norm{g_{F}^{f}-f}_{S}
    =
    \sup\left\{\abs{(g_{F}^{f}-f)(y)}\mid y\in S\right\}\\
    &=
    \max
    \left(
      \left\{
        \sup_{y\in[x_{i-1},x_{i})\cap S}(f(x_{i-1})-f(y))
      \right\}_{1\leq i\leq \abs{F}-1}
      \cup
      \left\{
        f(x_{\abs{F}-1})
      \right\}
    \right).
  \end{split}
\end{equation*}

Suppose that \(f\colon S\to\mathbb{R}^+\) is a right-continuous and decreasing function, 
and let \(\overline{S}\) be the closure of \(S\).
For \(x\in \overline{S}\), we set
\begin{align*}
  f(x+)&:=
  \sup\{f(y)\mid y\in (x,\infty)\cap S\}, \text{ whenever \((x,\infty)\cap S\neq \emptyset\)},\\
  f(x-)&:=
  \begin{cases}
    \inf\{f(y)\mid y\in [0,x)\cap S\}, & \text{if \(x\neq 0\)},\\
    f(0), & \text{if \(x=0\)}.
  \end{cases}
\end{align*}
If \(x\) is a right, respectively left, accumulation point of \(S\),
then these values coincide with the corresponding topological one-sided limits.
We define a function \(\overline{f}\colon\overline{S}\to\mathbb{R}^+\) by 
\begin{equation*}
  \overline{f}(x):=
  \begin{cases}
    f(x) & \text{if \(x\in S\)},\\
    f(x+) & \text{if \(x\in\overline{S}\setminus S\) is a right-accumulation point},\\
    f(x-) & \text{otherwise}.
  \end{cases}
\end{equation*}
This function \(\overline{f}\) is right-continuous and decreasing.
We denote by
\begin{equation*}
  DP(\overline{f}):=\left\{x\in \overline{S}\setminus\{0\}\mid \overline{f}(x-)>\overline{f}(x)\right\},
\end{equation*}
the set of left-jump points of \(\overline{f}\).

\begin{lemma}
  Let \(S\) be a subset of \(\mathbb{R}^+\) with \(0\in S\),
  and let \(f\colon S\to\mathbb{R}^+\) be a right-continuous decreasing function with \(DP(\overline{f})\subset S\).

  Let \(a,b\in S\) with \(a<b\).
  Then the following statements hold.
  \begin{enumerate}
    \item  If \(0\leq \lambda < f(a)-f(b-)\),
    then for any \(\mu>0\), there exists \(c\in [a,b)\cap S\) with \(\lambda< f(a)-f(c)\) and \(f(a)-f(c-)<\lambda+\mu\).
    \item For every \(\varepsilon>0\), there exists a finite subset \(F'\subset S\) of the form \(F'=\{a=x'_0<x'_1<\cdots<x'_k=b\}\)
    such that 
    \begin{equation*}
      \max_{1\leq i\leq k}\{f(x'_{i-1})-f(x'_{i}-)\}<\varepsilon.
    \end{equation*}
  \end{enumerate}
\end{lemma}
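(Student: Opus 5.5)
The idea for (1) is to take \(c\) to be, essentially, the first point of \([a,b)\cap S\) where \(f\) drops below the level \(f(a)-\lambda\). The hypothesis \(DP(\overline{f})\subset S\) rules out the bad case where this first point lies outside \(S\) and \(f\) jumps there. Part (2) then follows by applying (1) greedily.

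For (1), put
\begin{equation*}
  T:=\{x\in[a,b)\cap S\mid f(x)<f(a)-\lambda\}.
\end{equation*}
\emph{\(T\) is nonempty.} Since \(f(b-)<f(a)-\lambda\) and \(f(b-)\) is an infimum over \([0,b)\cap S\), there is \(y\in[0,b)\cap S\) with \(f(y)<f(a)-\lambda\leq f(a)\). As \(f\) is decreasing, this forces \(y>a\), so \(y\in T\).

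Let \(c_0:=\inf T\). Then \(a\leq c_0\leq y<b\), and \(c_0\in\overline{S}\). Every \(x\in S\) with \(x<c_0\) satisfies \(f(x)\geq f(a)-\lambda\); for \(x<a\) this is monotonicity, and for \(a\leq x<c_0\) it holds because \(x\notin T\). There are three cases.

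\emph{Case \(c_0\in T\).} Take \(c:=c_0\). Then \(c\neq a\), since \(f(a)<f(a)-\lambda\) is impossible, so \(c>0\). Hence \(f(c-)\geq f(a)-\lambda\). This gives \(f(a)-f(c-)\leq\lambda<\lambda+\mu\) and \(f(a)-f(c)>\lambda\).

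\emph{Case \(c_0\in S\setminus T\).} Then \(c_0\) is a right accumulation point of \(T\). By right-continuity, \(f(c_0)=\lim_{x\downarrow c_0,\,x\in T}f(x)\leq f(a)-\lambda\). Since \(c_0\notin T\), we get \(f(c_0)=f(a)-\lambda\). Choose \(c\in T\) near \(c_0\) with \(f(c)>f(a)-\lambda-\mu\). Then \(f(a)-f(c)>\lambda\), and \(f(a)-f(c-)\leq f(a)-f(c)<\lambda+\mu\).

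\emph{Case \(c_0\notin S\).} Here \(c_0>a\geq 0\) and \(c_0\) is a right accumulation point of \(T\), so \(\overline{f}(c_0)=f(c_0+)\leq f(a)-\lambda\). On the other hand, \(\overline{f}(c_0-)\geq f(a)-\lambda\) by the observation about points to the left of \(c_0\). If \(\overline{f}(c_0-)>\overline{f}(c_0)\), then \(c_0\in DP(\overline{f})\subset S\), a contradiction. Hence \(f(c_0+)=f(a)-\lambda\). Since \(f(c_0+)\) is a supremum, we can pick \(c\in T\) with \(f(c)>f(a)-\lambda-\mu\) and conclude as in the previous case.

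For (2), construct the points inductively, starting from \(x'_0:=a\). Given \(x'_{i-1}<b\):
\begin{itemize}
  \item If \(f(x'_{i-1})-f(b-)<\varepsilon\), set \(x'_i:=b\) and stop.
  \item Otherwise apply (1) to the pair \(x'_{i-1}<b\) with \(\lambda=\mu=\varepsilon/2\). This gives \(x'_i\in[x'_{i-1},b)\cap S\) with \(f(x'_{i-1})-f(x'_i)>\varepsilon/2\) and \(f(x'_{i-1})-f(x'_i-)<\varepsilon\). The first inequality forces \(x'_i>x'_{i-1}\).
\end{itemize}
Each non-terminal step lowers \(f\) by more than \(\varepsilon/2\). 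Since \(0\leq f\leq f(a)\) on \([a,b]\cap S\), there are at most \(2f(a)/\varepsilon\) such steps, so the process stops after finitely many steps. It produces the required partition, because every consecutive pair satisfies \(f(x'_{i-1})-f(x'_i-)<\varepsilon\).

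The main obstacle is the case \(c_0\notin S\) in (1), which is the only place where \(DP(\overline{f})\subset S\) is used. The remaining steps are routine manipulations of monotone one-sided limits.
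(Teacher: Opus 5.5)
Your proof is correct and follows essentially the same route as the paper. The paper also takes the infimum of the set where \(f(a)-\overline{f}\) exceeds \(\lambda\) (over \([a,b)\cap\overline{S}\) rather than over \(S\)), uses \(DP(\overline{f})\subset S\) to rule out a jump at an infimum lying outside \(S\), uses right-continuity to find a nearby point, and proves (2) by the same greedy iteration with \(\lambda=\mu=\varepsilon/2\); your three-case bookkeeping, in particular the explicit treatment of \(c_0\notin S\), is if anything more careful than the paper's version.
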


\begin{proof}
  We may assume that \(f(0)\neq 0\) since the claims are trivial when \(f(0)=0\).

  \medbreak

  (1)
  For \(a\in S\), we define a right-continuous non-decreasing function 
  \(h_a\colon \overline{S}\to \mathbb{R}\) by
  \begin{equation*}
    h_a(x):=f(a)-\overline{f}(x).
  \end{equation*}

  Set 
  \begin{equation*}
    A:=\{c\in[a,b)\cap\overline{S}\mid h_a(c)>\lambda\}.
  \end{equation*}
  Since \(\lambda<h_a(b-)\), the set \(A\) is non-empty and we define
  \begin{equation*}
    t:=\inf{A}.
  \end{equation*}
  Then by definition, we have \(h_a(t-)\leq\lambda\).
  
  First, suppose that \(h_a(t)>\lambda\).
  Since the set of left-jump points of \(h_a\) coincides with \(DP(\overline{f})\subset S\),
  \(t\in DP(\overline{f})\subset S\) and thus \(c=t\) satisfies the required conditions.
  Indeed, \(h_a(c-)\leq\lambda<\lambda+\mu\).

  Suppose \(h_a(t)\leq\lambda\). 
  Since \(h_a\) is right-continuous, 
  there exists \(\nu>0\) such that 
  \begin{equation*}
    x\in [t,t+\nu)\cap\overline{S}\implies h_a(x)<\lambda+\mu.
  \end{equation*}
  We may moreover take \(\nu>0\) so small that \(t+\nu<b\).
  Since \(t=\inf{A}\), 
  there exists 
  \begin{equation*}
    x\in [t,t+\nu)\cap A,
  \end{equation*}
  thus \(\lambda<h_a(x)<\lambda+\mu\).

  If \(x\in S\), then we put \(c:=x\), and \(c\) has the desired properties.
  Indeed, since \(h_a\) is non-decreasing, 
  \(h_a(c-)\leq h_a(c)<\lambda+\mu\).
  If \(x\notin S\), 
  then \(h_a\) has no jump at \(x\) 
  because 
  the set of left-jump points of \(h_a\) coincides with \(DP(\overline{f})\subset S\).
  Since \(h_a(x)>\lambda\), we can choose
  \(c\in S\) sufficiently close to \(x\) so that
  \begin{equation*}
    h_a(c-)\leq h_a(c)<\lambda+\mu.
  \end{equation*}

  Thus in all cases there exists \(c\in [a,b)\cap S\) such that
  \begin{equation*}
    \lambda<h_a(c)\quad\text{and}\quad h_a(c-)<\lambda+\mu.
  \end{equation*}

  \medbreak

  (2)
  If \(f(a)-f(b-)<\varepsilon\), then \(F':=\{a,b\}\) has the required property.
  We suppose that \(f(a)-f(b-)\geq\varepsilon\).
  Since \(f(a)-f(b-)<\infty\), 
  by repeatedly applying (1) with \(\lambda=\mu=\varepsilon/2\) finitely many times, 
  we obtain \(F'\) with the required property.

  We construct the required partition as follows.
  Let \(x_0:=a\).
  For \(i\geq 0\), 
  if \(f(x_i)-f(b-)\geq\varepsilon\), 
  then apply (1) to \([x_i,b)\) and obtain \(x_{i+1}\in(x_{i},b)\cap S\) satisfying the required properties.
  If \(f(x_i)-f(b-)<\varepsilon\), 
  then \(F':=\{x_j\}_{j=0}^{i}\cup\{b\}\) satisfies the claim.
  Since \(f(a)-f(b-)<\infty\) and \(f(a)-f(x_i)>i\varepsilon/2\),
  this procedure terminates after finitely many steps.
\end{proof}

\begin{definition}
  Let \((S,\leq)\) be a pre-ordered set, and let \(f\) be a function from \(S\) to \(\mathbb{C}\).
  We say that \(f\) \textit{vanishes at infinity} if, for all \(\varepsilon>0\), there exists \(x\in S\) such that 
  \begin{equation*}
    y\in S,\;x\leq y\implies \abs{f(y)}<\varepsilon.
  \end{equation*}
\end{definition}

\begin{lemma}
  \label{function:real:real:RCDappdbystepfunc}
  Let \(S\) be a subset of \(\mathbb{R}^+\) with \(0\in S\),
  let \(C>0\),
  and let \(f\colon S\to\mathbb{R}^+\) be a right-continuous decreasing function with \(DP(\overline{f})\subset S\).

  If \(f\) vanishes at infinity,    
  then there exists a sequence \(\{F_{n}\}_{n=0}^\infty\) of finite subsets of \(S\)
  such that \(0\in F_{n}\), \(F_{n}\subset F_{n+1}\) and 
  \begin{align*}
    \norm{g^{f}_{F_{n}}-f}< C/2^n, \qquad \text{for all \(n\in\mathbb{N}\).}
  \end{align*}
\end{lemma}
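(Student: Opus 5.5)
The plan is to use the vanishing at infinity to truncate to a bounded interval, and then partition that interval with part (2) of the preceding lemma. I construct the sets \(F_n\) inductively, refining at each stage so that \(F_n\subset F_{n+1}\) holds automatically.

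Fix \(n\) and put \(\varepsilon_n:=C/2^{n+1}\). Since \(f\) vanishes at infinity, there is \(b_n\in S\) with \(f(y)<\varepsilon_n\) for all \(y\in S\) with \(y\geq b_n\). We may enlarge \(b_n\) to be at least \(\max F_{n-1}\) and, if needed, larger than \(0\). If \(S=\{0\}\), or \(f(0)<C/2^n\), the claim is trivial with \(F_n=\{0\}\), because then \(\norm{g^f_{\{0\}}-f}=f(0)\). Let \(G_{n-1}:=F_{n-1}\cup\{b_n\}\) (with \(F_{-1}:=\{0\}\)), listed as \(0=y_0<\cdots<y_m=b_n\). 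Apply part (2) of the preceding lemma on each interval \([y_{j-1},y_j]\) with \(\varepsilon=\varepsilon_n\). This gives finite sets \(F'_j\subset S\) with endpoints \(y_{j-1},y_j\) such that \(f(x'_{i-1})-f(x'_i-)<\varepsilon_n\) for consecutive points. Let \(F_n\) be the union of all the \(F'_j\). Then \(F_n\) is finite, contains \(0\), contains \(F_{n-1}\), and contains \(b_n\) as its largest element.

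Next I estimate the norm using the displayed formula for \(\norm{g^f_F-f}\). For consecutive points \(x_{i-1}<x_i\) of \(F_n\) and \(y\in[x_{i-1},x_i)\cap S\), monotonicity gives \(f(y)\geq f(x_i-)\). Here \(f(x_i-)\) is the infimum over \([0,x_i)\cap S\), and this set contains \(y\). Hence
\begin{equation*}
f(x_{i-1})-f(y)\leq f(x_{i-1})-f(x_i-)<\varepsilon_n .
\end{equation*}
The tail term is \(f(\max F_n)=f(b_n)<\varepsilon_n\). Taking the maximum over finitely many terms gives \(\norm{g^f_{F_n}-f}\leq\varepsilon_n<C/2^n\). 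The supremum over each interval is at most the bound, which is strictly below \(C/2^n\), so no issue with strictness arises.

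The only delicate point is the tail, and it is handled by including \(b_n\) in \(F_n\) and checking that \(b_n\) is its maximum. Refining an existing partition by part (2) preserves the earlier points, so the nestedness \(F_n\subset F_{n+1}\) comes for free. The hypotheses of right-continuity and \(DP(\overline{f})\subset S\) are used only through part (2) of the preceding lemma.
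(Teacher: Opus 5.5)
Your proposal is correct and is essentially the paper's argument: you induct, choose \(b_n\in S\) with \(b_n\geq\max F_{n-1}\) beyond which \(f\) is small, and refine each gap of \(F_{n-1}\cup\{b_n\}\) using part (2) of the preceding lemma. Your explicit check that \(f(x_{i-1})-f(y)\leq f(x_{i-1})-f(x_i-)\), together with the extra factor \(1/2\) in \(\varepsilon_n\), spells out what the paper leaves as ``satisfies the required properties.''
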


\begin{proof}
  We may assume that \(f(0)\neq 0\) since this claim is trivial when \(f(0)=0\).

  We prove this lemma by induction on \(n\).

  First consider the case \(n=0\). 
  Since \(f\) vanishes at infinity, 
  there exists \(b_0\in S\)
  with \(f(b_0)<C/2^{0}\).
  If \(b_0=0\), set \(F_0:=\{0\}\). 
  Then \(g^{f}_{F_0}=0\) and \(\norm{g^{f}_{F_0}-f}=f(0)<C\).
  If \(b_0>0\), applying the previous lemma (2) with \(a=0,b=b_0\), and \(\varepsilon=C\),
  we obtain a subset \(F_{0}\) of \(S\) such that
  \(\norm{g^{f}_{F_{0}}-f}<C/2^{0}\).

  Suppose this lemma holds for \(n=k\), that is, 
  there exists 
  a subset \(F_{k}\) of \(S\) such that \(\norm{g^{f}_{F_{k}}-f}<C/2^{k}\).
  Since \(f\) vanishes at infinity, 
  there exists \(b_{k+1}\in S\)
  such that \(\max{F_{k}}\leq b_{k+1}\) and \(f(b_{k+1})<C/2^{k+1}\).
  We write \(F'=F_{k}\cup\{b_{k+1}\}\) in the form
  \begin{equation*}
    F'=\{0=x'_{0}<x'_{1}<\cdots<x'_{\abs{F'}-1}\}\subset S,
  \end{equation*} 
  
  For each \(1\leq i \leq \abs{F'}-1\), applying the previous lemma (2) with \(a=x'_{i-1},b=x'_{i}\), and \(\varepsilon=C/2^{k+1}\), we obtain a finite subset \(F'_{i}\) of \(S\) with 
  \begin{equation*}
    F'_{i}=\{x'_{i-1}=x''_{i,0}<x''_{i,1}<\cdots<x''_{i,\abs{F'_{i}}-1}=x'_{i}\},
  \end{equation*}
  and \(\max_{1\leq j\leq \abs{F'_{i}}-1}\{f(x''_{i,j-1})-f(x''_{i,j}-)\}<C/2^{k+1}\).
  Thus 
  \begin{equation*}
    F_{k+1}:=F'\cup\bigcup_{1\leq i\leq \abs{F'}-1}F'_{i}\supset F_{k},
  \end{equation*}
  satisfies the required properties.
\end{proof}

\begin{lemma}
  \label{uniform_limit:RC:DP}
  Let \(S\) be a subset of \(\mathbb{R}^+\) containing \(0\), 
  and let \(\{f_n\colon S\to\mathbb{R}^+\}\)
  be a sequence of decreasing right-continuous functions 
  such that \(\{f_n\}\) uniformly converges to a function \(f\) on \(S\). 
  If \(DP(\overline{f_n})\subset S\) for all \(n\), 
  then \(f\) is decreasing right-continuous and \(DP(\overline f)\subset S\).
\end{lemma}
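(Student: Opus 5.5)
The proof has three parts: monotonicity and right-continuity on \(S\) come directly from the uniform limit; the real content is the inclusion \(DP(\overline f)\subset S\). For that, I will first prove that jumps pass to uniform limits and then argue by contradiction.

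\emph{Monotonicity.} If \(x\leq y\) in \(S\), then \(f_n(x)\geq f_n(y)\) for every \(n\), and letting \(n\to\infty\) gives \(f(x)\geq f(y)\).

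\emph{Right-continuity.} Fix \(x\in S\) and \(\varepsilon>0\). Choose \(n\) with \(\norm{f_n-f}<\varepsilon/3\). By right-continuity of \(f_n\), pick \(\delta>0\) such that \(\abs{f_n(y)-f_n(x)}<\varepsilon/3\) for \(y\in[x,x+\delta)\cap S\). The usual \(\varepsilon/3\) argument then gives \(\abs{f(y)-f(x)}<\varepsilon\) on \([x,x+\delta)\cap S\). Hence \(f\) is decreasing and right-continuous, and \(\overline f\) is defined.

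\emph{Jumps pass to the limit.} I will show that \(\norm{\overline{f_n}-\overline f}_{\overline S}\leq\norm{f_n-f}_S\), and that the same bound holds for the one-sided limits \(\overline{f_n}(x-)\) versus \(\overline f(x-)\) at every \(x\in\overline S\). This holds because each of \(f(x\pm)\) is defined as an \(\inf\) or \(\sup\) over subsets of \(S\), and such operations are \(1\)-Lipschitz for the sup norm. Care is needed with the case distinction in the definition of \(\overline f\), but the case used at a point \(x\) depends only on \(S\) and \(x\), not on the function. So \(\overline{f_n}\) and \(\overline f\) use the same formula at each point, and the estimate holds pointwise.

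\emph{Conclusion by contradiction.} Suppose \(x\in DP(\overline f)\setminus S\), with jump \(\delta:=\overline f(x-)-\overline f(x)>0\). Choose \(n\) with \(\norm{f_n-f}<\delta/3\). By the previous step,
\begin{equation*}
\overline{f_n}(x-)-\overline{f_n}(x)\geq \delta-2\delta/3>0,
\end{equation*}
so \(x\in DP(\overline{f_n})\subset S\), a contradiction.

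\emph{Main obstacle.} The one delicate point is the case \(x\notin S\) that is not a right-accumulation point of \(S\), where \(\overline f(x)=f(x-)\). Here \(\overline f(x-)=\overline f(x)\) holds by definition, so such points are never jump points of \(\overline f\), and the argument is consistent. For right-accumulation points \(x\notin S\), \(\overline f(x)=f(x+)\), and the Lipschitz estimate for the \(\sup\) applies directly. I expect this bookkeeping to be the only real work.
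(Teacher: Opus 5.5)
Your proposal is correct and follows essentially the same route as the paper: compare \(\overline{f_n}\) with \(\overline f\), and their left limits, pointwise within \(\norm{f_n-f}\), then use the triangle inequality to transfer a left jump of \(\overline f\) at \(x\) to a left jump of \(\overline{f_n}\), forcing \(x\in DP(\overline{f_n})\subset S\). The only difference is cosmetic: you phrase the last step as a contradiction, while the paper concludes the membership directly. You also justify the left-limit estimate more explicitly than the paper, which uses it only implicitly.
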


\begin{proof}
  The uniform limit of right-continuous functions is right-continuous.
  Moreover, \(f\) is decreasing as the pointwise limit of decreasing functions.

  It remains to prove 
  that \(DP(\overline{f})\subset S\) if \(DP(\overline{f_n})\subset S\) for all \(n\).

  Suppose that \(x\in DP(\overline{f})\), that is, \(\overline{f}(x-)-\overline{f}(x)>2\varepsilon\) for some \(\varepsilon>0\).
  Since \(\{f_n\}\) uniformly converges to \(f\), 
  there exists \(n\) with \(\norm{f-f_n}<\varepsilon/2\).
  By definition, for any \(x\in \overline{S}\setminus S\),
  \begin{equation*}
    \abs{\overline{f}(x)-\overline{f_n}(x)}\leq\norm{f-f_n}.
  \end{equation*}
  Thus \(\norm{\overline{f}-\overline{f_n}}_{\overline{S}}=\norm{f-f_n}\).
  We have
  \begin{align*}
    2\varepsilon
    &<\overline{f}(x-)-\overline{f}(x)\\
    &\leq
    \overline{f}(x-)-\overline{f_n}(x-)
    +\overline{f_n}(x-)-\overline{f_n}(x)
    +\overline{f_n}(x)-\overline{f}(x)\\
    &\leq 2\norm{f_n-f}+\overline{f_n}(x-)-\overline{f_n}(x).
  \end{align*}
  Therefore, \(\overline{f_n}(x-)-\overline{f_n}(x)>\varepsilon\) and \(x\in DP(\overline{f_n})\subset S\).
\end{proof}

\section{Definition and Basic Properties}

\label{section:SVF}

In this section 
we introduce the singular value functions for C\(^*\)-algebras 
and investigate their basic properties.

\subsection{Definition of singular value functions for C\(^*\)-algebras}

\label{subsection:Dfn_of_SVF}

The definition is motivated by the classical theory of singular values for compact operators on Hilbert spaces 
and its extension to semifinite von Neumann algebras with faithful traces.
In those settings, singular values (singular value functions) admit variational characterizations in terms of projections with bounded rank or trace.
Replacing rank or trace by the order structure of the positive cone of the \(K_0\)-group, we define singular value functions for C\(^*\)-algebras.

\begin{definition}
  \label{SVF:definition} 
  \label{SVF:Property:decreasing}
  Let \(A\) be a C\(^*\)-algebra. 
  For \(a\in A\), 
  we define a function \(s(a)\) on \(K_0(A)^+\) by  
  \begin{equation*}
    s_g(a):=\inf\{\norm{a-ap}\mid{p}\in{P(A)},\;[p]_0\leq g\}\in\mathbb{R}^+,
  \end{equation*}
  for each \(g\in K_0(A)^+\). 
  We call the function \(s(a)\) the \textit{singular value function} of \(a\). 
  It is clear that \(s(a)\) is nonnegative and decreasing.
  We also regard \(s\) as a map
  \begin{equation*}
    s \colon A \longrightarrow \{\, f \mid f \colon K_0(A)^+ \to \mathbb{R}^+ \,\}, \qquad a \longmapsto s(a).
  \end{equation*}
\end{definition}

\begin{remark}
  \label{SVF:extent:multiplier_algebra}
  Let \(A\) be a C\(^*\)-algebra.
  We can extend \(s\) to a map from the multiplier algebra \(M(A)\).
  For \(a\in M(A)\), the singular value function \(s(a)\) is defined by
  \begin{equation*}
    s_g(a):=\inf\{\norm{a-ap}\mid{p}\in{P(A)},[p]_0\leq g\}\in\mathbb{R}^+,
  \end{equation*}
  for each \(g\in K_0(A)^+\).
\end{remark}

\begin{remark}
  \label{SVF:extended:classical_SVS}
  For every \(a \in \mathcal{K}\), the singular value function \(s(a)\) coincides with the classical singular value sequence of \(a\).
  Indeed, the positive cone of the \(K_0\)-group of \(\mathcal{K}\) is naturally identified with \(\mathbb{N}\) via the rank map. 
  For \(a\in \mathcal{K}\) and \(n\in K_0(\mathcal{K})^+=\mathbb{N}\), we have 
  \begin{align*}
    s_n(a)
    &=\inf\{\norm{a-ap}\mid{p}\in{P(\mathcal{K})},[p]_0\leq n\}\\
    &=\inf\left\{\norm{a-ap}\mid p \text{ is a projection with } \mathrm{rank}(p)\leq n\right\}.
  \end{align*}
  Thus singular value sequences for \(\mathcal{K}\) are special cases of singular value functions for C\(^*\)-algebras. 
\end{remark}

\begin{proposition}
  \label{SVF:Property:vanishes_at_infty}
  Let 
  \(a\) be an element in a C\(^*\)-algebra \(A\).
  \begin{enumerate}
    \item If \(A\) is unital, then \(s_{[1_A]_0}(a)=0\).
    \item If \(A\) admits an approximate unit consisting of projections,
    then \(s(a)\) vanishes at infinity.
  \end{enumerate}
\end{proposition}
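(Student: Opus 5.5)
Both parts follow directly from the definition of \(s_g(a)\) as an infimum over projections \(p\) with \([p]_0\leq g\). The plan is to exhibit, in each case, one admissible projection that makes \(\norm{a-ap}\) equal to \(0\) or small.

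For (1), take \(p:=1_A\in P(A)\). Then \([p]_0=[1_A]_0\leq[1_A]_0\), so \(p\) is admissible in the infimum defining \(s_{[1_A]_0}(a)\). Since \(\norm{a-a1_A}=0\) and \(s(a)\) is nonnegative, we get \(s_{[1_A]_0}(a)=0\).

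For (2), let \(\{e_\lambda\}_{\lambda\in\Lambda}\) be an approximate unit of \(A\) consisting of projections, and fix \(\varepsilon>0\). We need \(x\in K_0(A)^+\) such that \(s_y(a)<\varepsilon\) whenever \(x\leq y\).
\begin{enumerate}
\item Since \(ae_\lambda\to a\) in norm, choose \(\lambda\) with \(\norm{a-ae_\lambda}<\varepsilon\).
\item Put \(x:=[e_\lambda]_0\in K_0(A)^+\).
\item If \(y\in K_0(A)^+\) and \(x\leq y\), then \(p=e_\lambda\) satisfies \([p]_0\leq y\), so it is admissible for \(s_y(a)\).
\item Hence \(s_y(a)\leq\norm{a-ae_\lambda}<\varepsilon\).
\end{enumerate}
This is exactly the statement that \(s(a)\) vanishes at infinity in the pre-ordered set \((K_0(A)^+,\leq)\). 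No cancellation or monotonicity of the approximate unit is required.

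There is no real obstacle. The only point to check is that the approximate unit gives right convergence \(ae_\lambda\to a\); this holds because approximate units satisfy \(\norm{a-ae_\lambda}\to0\) for every \(a\in A\).
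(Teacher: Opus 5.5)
Your proof is correct and is the same argument as the paper's: in (1) you use the admissible projection \(1_A\), and in (2) you pick a projection \(q\) from the approximate unit with \(\norm{a-aq}<\varepsilon\) and take \([q]_0\) as the threshold in \(K_0(A)^+\), exactly as the paper does.
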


\begin{proof}
  (1) Trivial.
  (2)
  Since \(A\) has an approximate unit consisting of projections,
  there exists a projection \(q\in A\) with \(\norm{a-aq}<\varepsilon\).
  Set \(g:=[q]_0\). 
  For any \(h\in K_0(A)^+\) with \(g\leq h\), 
  we have
  \begin{align*}
    s_h(a)&=\inf\{\norm{a-ap}\mid p\in P(A), [p]_0\leq h\}\\
    &\leq\norm{a-aq}<\varepsilon.
  \end{align*}
  Thus the claim follows. 
\end{proof}

\begin{remark}
  The hypothesis of the previous proposition is satisfied, for example, by the following classes C\(^*\)-algebras:
  \begin{enumerate}
    \item C\(^*\)-algebras of real rank zero,
    \item stable stably unital C\(^*\)-algebras.
  \end{enumerate}
\end{remark}

\subsection{Singular value functions of positive elements with finite spectrum}

In this subsection, 
we determine the singular value functions of positive elements with finite spectrum.

First, we compute the singular value functions of projections.

\begin{lemma}
  \label{SVF:Example:projection}
  Let \(p\) be a projection in a C\(^*\)-algebra \(A\).
  Then we have
  \begin{equation*}
    s_g(p)=
    \begin{cases}
      1 & \text{if \([p]_0\not\leq g\),}\\
      0 & \text{if \([p]_0\leq g\),}
    \end{cases}
  \end{equation*}
  for any \(g\in K_0(A)^+\).
\end{lemma}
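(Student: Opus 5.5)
We split into the two cases of the statement and handle each directly from Definition~\ref{SVF:definition}, using only that every element of the infimum is a norm of an element of the form \(p-pq\) with \(q\in P(A)\).

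\emph{Case \([p]_0\leq g\).} We take \(q:=p\) itself in the infimum defining \(s_g(p)\). This is admissible since \([q]_0=[p]_0\leq g\), and \(\norm{p-pp}=0\). Since \(s_g(p)\geq 0\) trivially, we get \(s_g(p)=0\).

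\emph{Case \([p]_0\not\leq g\).} First the upper bound. The zero projection is admissible, because \([0]_0=0\leq g\) for \(g\in K_0(A)^+\). Hence \(s_g(p)\leq\norm{p}\leq 1\). (If \(p=0\) then \([p]_0=0\leq g\), so in this case \(p\neq0\) and \(\norm{p}=1\), although only the inequality is needed.)

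For the lower bound, let \(q\in P(A)\) be any projection with \([q]_0\leq g\). We argue by contradiction: suppose \(\norm{p-pq}<1\). Lemma~\ref{subordinate:SC} then gives \(p\lesssim q\), so \(p\) is Murray--von Neumann equivalent to some subprojection \(p'\leq q\). Consequently
\begin{equation*}
  [p]_0=[p']_0\leq [p']_0+[q-p']_0=[q]_0\leq g,
\end{equation*}
since \(q-p'\) is a projection and so \([q-p']_0\in K_0(A)^+\). This contradicts \([p]_0\not\leq g\). Therefore \(\norm{p-pq}\geq 1\) for every admissible \(q\). Taking the infimum gives \(s_g(p)\geq 1\), and so \(s_g(p)=1\).

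There is no real obstacle here. The only step with content is passing from the norm estimate to the order relation in \(K_0(A)^+\), and Lemma~\ref{subordinate:SC} together with additivity of \([\cdot]_0\) on orthogonal projections handles it. No cancellation hypothesis is needed.
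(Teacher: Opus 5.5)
Your proof is correct and follows the paper's argument: the choice $q=p$ when $[p]_0\leq g$, the upper bound $s_g(p)\leq 1$ from an admissible projection, and the lower bound by contradiction via Lemma~\ref{subordinate:SC}. Your derivation of $[p]_0\leq[q]_0$ from $p\sim_{\mathrm{MvN}}p'\leq q$ makes explicit a step the paper leaves implicit, and you are right that cancellation is not needed.
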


\begin{proof}
  Let \(g\) be an element in \(K_0(A)^+\).
  Since \(p\) is a projection, we have
  \begin{align*}
    0\leq s_g(p)\leq\norm{p}\norm{1_{\tilde{A}}-q}\leq1,
  \end{align*}
  where \(q\in P(A)\) with \([q]_0\leq g\). 
  If \([p]_0\leq g\), then \(s_g(p)\leq\norm{p-pp}=0\) and therefore \(s_g(p)=0\). 
  Suppose that \([p]_0\not\leq g\) and \(s_g(p)<1\).
  Then there exists a projection \(q\) such that \(\norm{p-pq}<1\) and \([q]_0\leq g\).
  By Lemma~\ref{subordinate:SC}, \([p]_0\leq[q]_0\) and 
  this is a contradiction.
  Thus we obtain \(s_g(p)=1\) for \([p]_0\not\leq g\).
\end{proof}

The following proposition says that the singular value functions of positive elements with finite spectrum are step functions. 

\begin{proposition}
  \label{SVF:Example:positive_elements_with_finite_spectrum}
  Let \(A\) be a C\(^*\)-algebra.
  If \(p_1,p_2,\dots,p_n\) are mutually orthogonal nonzero projections in \(A\) 
  and \(\alpha_0,\alpha_1,\dots,\alpha_{n-1}\) belong to \(\mathbb{R}^{+}\) with \(\alpha_0\geq\alpha_1\geq\cdots\geq\alpha_{n-1}\),
  then, for all \(g\in K_0(A)^+\), we have
  \begin{equation*}
    s_g\left(\sum_{i=1}^n\alpha_{i-1}p_i\right)
    =\min_{0\leq k\leq n}
    \left\{
      \alpha_k\mid\left[\hat{p}_k\right]_0\leq g
    \right\},
  \end{equation*}
  where \(\hat{p}_0=p_0=0\) and \(\alpha_n=0\).
\end{proposition}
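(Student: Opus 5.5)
Write \(a=\sum_{i=1}^n\alpha_{i-1}p_i\) and fix \(g\in K_0(A)^+\). Let \(k_0\) be the index attaining \(\min\{\alpha_k\mid[\hat p_k]_0\leq g\}\). This set is nonempty, since \(k=0\) always qualifies. Because the \(\alpha\)'s are decreasing, we may take \(k_0\) to be the largest \(k\) with \([\hat p_k]_0\leq g\); the minimum is attained there. The plan is to prove the two inequalities \(s_g(a)\leq\alpha_{k_0}\) and \(s_g(a)\geq\alpha_{k_0}\) separately.

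The upper bound is direct. Take \(p=\hat p_{k_0}\), which is a projection with \([p]_0\leq g\). Then \(a-ap=\sum_{i>k_0}\alpha_{i-1}p_i\). This is a sum over orthogonal projections, so its norm is \(\alpha_{k_0}\) when \(k_0<n\) and \(0=\alpha_n\) when \(k_0=n\). Hence \(s_g(a)\leq\alpha_{k_0}\).

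For the lower bound, suppose \(q\in P(A)\) with \([q]_0\leq g\) and \(\norm{a-aq}<\alpha_{k_0}\); we aim for a contradiction. In particular \(k_0<n\) and \(\alpha_{k_0}>0\). Set \(e:=\hat p_{k_0+1}\). Since \(\alpha_0\geq\cdots\geq\alpha_{k_0}\), the element \(ea=\sum_{i\leq k_0+1}\alpha_{i-1}p_i\) is invertible in \(eAe\), with inverse \(b\) satisfying \(\norm{b}\leq 1/\alpha_{k_0}\). Then
\[
\norm{e-eq}=\norm{b(ea-eaq)}\leq\norm{b}\,\norm{e(a-aq)}\leq\norm{a-aq}/\alpha_{k_0}<1.
\]
By Lemma~\ref{subordinate:SC}, \(e\lesssim q\), so \([\hat p_{k_0+1}]_0\leq[q]_0\leq g\). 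This contradicts the maximality of \(k_0\). Thus \(s_g(a)\geq\alpha_{k_0}\).

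The main point is this lower-bound step: converting the norm estimate on \(a-aq\) into one on \(e-eq\) via the inverse of \(ea\) inside the corner, using \(ea=ae\) and \(e(a-aq)=ea-eaq\). Once \(\norm{e-eq}<1\), Lemma~\ref{subordinate:SC} does the rest. If \(\alpha_{k_0}=0\), the lower bound is trivial. No cancellation hypothesis is needed, matching the statement.
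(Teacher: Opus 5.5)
Your proof is correct and follows essentially the same route as the paper: the upper bound via \(\hat p_{k_0}\) is identical, and for the lower bound you derive the same key estimate \(\alpha_{k_0}\norm{\hat p_{k_0+1}-\hat p_{k_0+1}q}\leq\norm{a-aq}\) and then apply Lemma~\ref{subordinate:SC}. The differences are minor: the paper gets that estimate from the C\(^*\)-identity and the operator inequality \(\sum_{i\leq k}\alpha_{i-1}^2p_i\geq\alpha_{k-1}^2\hat p_k\) and passes through Lemma~\ref{SVF:Example:projection}, whereas you invert \(ea\) in the corner \(eAe\) and argue by contradiction, and choosing \(k_0\) as the largest admissible index handles the paper's final case analysis cleanly.
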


\begin{proof}
  Set
  \begin{equation*}
    a:=\sum_{i=1}^n\alpha_{i-1}p_i.
  \end{equation*}

  For \(r\in P(A)\) and \(1\leq k\leq n\), 
  we have
  \begin{align*}
    \norm{{a}-{a} r}
    &\geq\norm{\hat{p}_k({a}-{a} r)}\\
    &\geq\norm{(\alpha_0p_1+\alpha_1p_2+\cdots+\alpha_{k-1}p_{k})(1_{\tilde{A}}-r)}\\
    &=\norm{(1_{\tilde{A}}-r)(\alpha_0^2p_1+\alpha_1^2p_2+\cdots+\alpha_{k-1}^2p_{k})(1_{\tilde{A}}-r)}^\frac{1}{2}\\
    &\geq\norm{(1_{\tilde{A}}-r)(\alpha_{k-1}^2p_1+\alpha_{k-1}^2p_2+\cdots+\alpha_{k-1}^2p_{k})(1_{\tilde{A}}-r)}^\frac{1}{2}\\
    &=\alpha_{k-1} \norm{\hat{p}_k-\hat{p}_k r}.
  \end{align*}
  Hence, we obtain 
  \(s({a})\geq \alpha_{k-1}s(\hat{p}_k)\) for all \(1\leq k\leq n\).
  In particular, 
  the previous lemma implies that 
  \begin{equation*}
    s_g({a})\geq \alpha_{k-1} \text{ if } \left[\hat{p}_{k}\right]_0\not\leq g.
  \end{equation*}

  For each \(0\leq k\leq n-1\), 
  if \(\left[\hat{p}_k\right]_0\leq g\), 
  then we obtain
  \begin{align*}
    s_g({a})
    &\leq\norm{{a}-{a}\hat{p}_k}\\
    &=\norm{\alpha_{k}p_{k+1}+\alpha_{k+1}p_{k+2}+\cdots+\alpha_{n-1}p_n}\\
    &\leq\alpha_{k}\norm{p_{k+1}+p_{k+2}+\cdots+p_n}\\
    &=\alpha_k.
  \end{align*}
  We have \(s_g({a})\leq\norm{{a}-{a}\hat{p}_n}=0\)
  if \(\left[\hat{p}_n\right]_0\leq g\).
  Thus for all \(0\leq k\leq n\), 
  we obtain
  \begin{equation*}
    s_g({a})\leq \alpha_k \text{ if \(\left[\hat{p}_k\right]_0\leq g\)}.
  \end{equation*}

  As a consequence, we obtain
  \begin{align*}
    s_g({a})
    &=
    \begin{cases}
      \alpha_k & \text{ if \(\left[\hat{p}_k\right]_0\leq g,\;\left[\hat{p}_{k+1}\right]_0\not\leq g\qquad(0\leq k\leq n-1)\)}\\
      \alpha_n & \text{ if \(\left[\hat{p}_n\right]_0\leq g\)}
    \end{cases}\\
    &=\min_{0\leq k\leq n}
    \left\{
      \alpha_k\mid\left[\hat{p}_k\right]_0\leq g
    \right\}.
  \end{align*}
  This completes the proof.
\end{proof}

\subsection{Basic properties of singular value functions for C\(^*\)-algebras}

In this subsection, we establish basic properties of singular value functions for C\(^*\)-algebras.
Under natural assumptions, properties analogous to those of the singular values of compact operators also hold.

\subsubsection{Fundamental properties of \(s\)}

\begin{proposition}
  \label{SVF:Basic_Property:1}
  Let \(A\) be a C\(^*\)-algebra. For \(a,b\in A\), the following hold.
  \begin{enumerate}
    \item If \(A\) is stably finite, then \(s_0(a)=\norm{a}\).
    \item \(s(\alpha a)=\abs{\alpha}s(a)\) for all \(\alpha\in\mathbb{C}\).
    \item \(s(ba)\leq \norm{b}s(a)\).
    \item \(\abs{s_g(a)-s_g(b)}\leq\norm{a-b}\) for all \(g\in K_0(A)^+\).
    \item \(s(a)=s(\abs{a})\).
    \item \(s(ua)=s(a),\;s(au)=s(a)\) for all \(u\in U(M(A))\).
    \item Let \(f\colon \mathbb{R}^+\to\mathbb{R}^+\) be a continuous increasing function with \(f(0)=0\).
    If \(a\) is positive and \(a\) has finite spectrum, then \(s(f(a))=f(s(a))\).
    \item If \(a^*a\leq b^*b\), then \(s(a)\leq s(b)\).
  \end{enumerate}
\end{proposition}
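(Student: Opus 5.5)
Most items follow directly from the definition \(s_g(a)=\inf\{\norm{a-ap}\mid p\in P(A),\,[p]_0\le g\}\) combined with elementary norm estimates, so I will treat them one at a time.

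\emph{Item (1).} Since \(A\) is stably finite, \([p]_0\le 0\) forces \([p]_0=0\). In a stably finite algebra a projection with trivial \(K_0\)-class is \(0\): indeed \(p\oplus 1_n\sim 1_n\) for some \(n\), and finiteness of \(M_{n+1}(\tilde A)\) gives \(p=0\). Hence the only admissible projection is \(p=0\), and \(s_0(a)=\norm{a}\).

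\emph{Items (2)--(4).} For (2), use \(\norm{\alpha a-\alpha ap}=\abs{\alpha}\norm{a-ap}\); the case \(\alpha=0\) is trivial. For (3), use \(\norm{ba-bap}\le\norm{b}\norm{a-ap}\) and take the infimum. For (4), note
\[
\norm{a-ap}\le\norm{b-bp}+\norm{(a-b)(1-p)}\le\norm{b-bp}+\norm{a-b},
\]
take the infimum over \(p\), and then symmetrize in \(a\) and \(b\).

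\emph{Items (5), (6) and (8).} These all rest on the C\(^*\)-identity
\[
\norm{a(1-p)}^2=\norm{(1-p)a^*a(1-p)},
\]
computed in \(\tilde A\) or \(M(A)\). For (5), since \(a^*a=\abs{a}^2\), this gives \(\norm{a-ap}=\norm{\abs{a}-\abs{a}p}\) for every \(p\), so the infima agree. For (8), if \(a^*a\le b^*b\), then conjugating by \(1-p\) preserves the order, so
\[
(1-p)a^*a(1-p)\le(1-p)b^*b(1-p).
\]
Taking norms yields \(\norm{a-ap}\le\norm{b-bp}\), hence \(s(a)\le s(b)\).

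For (6), left multiplication is immediate: \(\norm{ua-uap}=\norm{a-ap}\). For right multiplication, \((au)^*(au)=u^*a^*au\), so
\[
\norm{au(1-p)}=\norm{a(1-upu^*)}.
\]
Here \(upu^*\) is a projection in \(A\) because \(A\) is an ideal in \(M(A)\). It has the same \(K_0\)-class as \(p\), since \(v=up\in A\) satisfies \(v^*v=p\) and \(vv^*=upu^*\). As \(p\mapsto upu^*\) is a bijection on admissible projections (with inverse given by \(u^*\)), the infima coincide.

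\emph{Item (7).} Write \(a=\sum_{i=1}^n\alpha_{i-1}p_i\) with \(\alpha_0>\alpha_1>\cdots>\alpha_{n-1}>0\), merging equal eigenvalues. Then \(f(a)=\sum_i f(\alpha_{i-1})p_i\), because \(f(0)=0\) and functional calculus acts on the spectrum, and since \(f\) is increasing the coefficients \(f(\alpha_0)\ge\cdots\ge f(\alpha_{n-1})\ge 0\) remain decreasing. Proposition~\ref{SVF:Example:positive_elements_with_finite_spectrum} does not require strictness or nonzero coefficients, so it applies to both \(a\) and \(f(a)\). It gives
\[
s_g(f(a))=\min_{0\le k\le n}\{f(\alpha_k)\mid[\hat p_k]_0\le g\},
\]
with \(f(\alpha_n)=f(0)=0\). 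Because \(f\) is increasing, \(\min_k f(\alpha_k)=f(\min_k\alpha_k)\) over the same index set, and this equals \(f(s_g(a))\).

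The only point that requires care, rather than being a one-line estimate, is (6) on the right together with (1): one must check that unitary conjugation preserves both membership in \(P(A)\) and the \(K_0\)-class, and that stable finiteness kills projections of class zero. I expect neither to be a real obstacle.
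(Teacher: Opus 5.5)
Your proposal is correct and follows essentially the same route as the paper. It uses the same norm estimates for (2)--(4), the same conjugation \(p\mapsto upu^*\) with \([upu^*]_0=[p]_0\) for (6), and the same application of Proposition~\ref{SVF:Example:positive_elements_with_finite_spectrum} to both \(a\) and \(f(a)\) for (7). The only difference is that you fill in details the paper leaves implicit: the C\(^*\)-identity argument behind (5) and (8), and the reason stable finiteness forces \(\{p\in P(A)\mid [p]_0\le 0\}=\{0\}\) in (1).
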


\begin{proof}
  (2), (3), (5) and (8) 
  are trivial.

  (1) 
  Since \(A\) is stably finite, \(\{p\in P(A)\mid[p]_0\leq0\}=\{0\}\).
  Therefore,
  \begin{equation*}
    s_0(a)=\inf\{\norm{a-ap}\mid{p}\in{P(A)},[p]_0\leq 0\}=\norm{a-a0}=\norm{a}.
  \end{equation*}

  \medbreak

  (4)
  Fix \(g\in K_0(A)^+\) and \(\varepsilon>0\).
  By definition of \(s_g(b)\), 
  there exists an element \(p\) in \(P(A)\) 
  such that \([p]_0\leq g\) and \(\norm{b-bp}<s_g(b)+\varepsilon\).
  Then we have,
  \begin{align*}
    s_g(a)
    &\leq\norm{a-ap}\\
    &\leq\norm{b-bp}+\norm{(a-b)-(a-b)p}\\
    &=\norm{b-bp}+\norm{(a-b)(1_{\tilde{A}}-p)}\\
    &<s_g(b)+\varepsilon+\norm{a-b}.
  \end{align*}
  Since \(\varepsilon>0\) is arbitrary, this implies
  \begin{equation*}
    s_g(a)-s_g(b)\leq\norm{a-b}.
  \end{equation*}
  Similarly, \(s_g(b)-s_g(a)\leq\norm{a-b}\). Thus we obtain \(\abs{s_g(a)-s_g(b)}\leq\norm{a-b}\).

  \medbreak

  (6) 
  It is clear that \(s(ua)=s(a)\).
  For any \(p\in P(A)\), 
  we have
  \begin{equation*}
    \norm{au-aup}
    =\norm{(a-aupu^*)u}
    =\norm{a-aupu^*},
  \end{equation*}
  and \([p]_0=[upu^*]_0\). 
  Thus we obtain \(s(au)=s(a)\).

  \medbreak

  (7)
  Since \(a\) is a positive element with finite spectrum, 
  there exist mutually orthogonal nonzero projections \(p_1,p_2,\dots,p_n\) in \(A\) and 
  \(\alpha_0,\alpha_1,\dots,\alpha_{n-1}\) in \(\mathbb{R}^+\) 
  with 
  \(\alpha_0\geq\alpha_1\geq\cdots\geq\alpha_{n-1}\)
  such that 
  \begin{equation*}
    a=\sum_{i=1}^{n}\alpha_{i-1}p_i.
  \end{equation*}
  Set \(p_0=0\) and \(\alpha_n=0\).
  Since \(f\) is increasing, \(\{f(\alpha_{i-1})\}_{i=1}^{n}\) is decreasing.
  By Proposition~\ref{SVF:Example:positive_elements_with_finite_spectrum}, 
  we obtain
  \begin{equation*}
    f(s_g(a))=\min_{0\leq k\leq n}
    \left\{
      f(\alpha_k)\mid\left[\hat{p}_k\right]_0\leq g
    \right\}.
  \end{equation*}
  and by the functional calculus: 
  \(f(a)=\sum_{i=1}^{n}f(\alpha_{i-1})p_i,\)
  we have
  \begin{equation*}
    s_g(f(a))=\min_{0\leq k\leq n}
    \left\{
      f(\alpha_k)\mid\left[\hat{p}_k\right]_0\leq g
    \right\}.
  \end{equation*}
  Thus we obtain \(s(f(a))=f(s(a))\).
\end{proof}

\begin{remark}
  \label{rmk:inMA:SVFBP1}
  The previous proposition except for (7) holds for singular value functions extended to elements in the multiplier algebras.
\end{remark}

\subsubsection{\(*\)-invariance of \(s\)}

In Corollary~\ref{SVF:Basic_Property:2}, 
we see that \(s\) is \(*\)-invariant on C\(^*\)-algebras of stable rank one. 

\begin{proposition}
  Let \(a\) be an element in a C\(^*\)-algebra \(A\).
  If \(a\) belongs to the closure of \(M(A)^{-1}\), then \(s(a)=s(a^*)\). 
\end{proposition}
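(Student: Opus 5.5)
By Proposition~\ref{SVF:Basic_Property:1}~(4), the map \(a\mapsto s_g(a)\) is \(1\)-Lipschitz for each \(g\). The map \(a\mapsto a^*\) is isometric, so \(a\mapsto s_g(a^*)\) is also \(1\)-Lipschitz, and both sides of \(s_g(a)=s_g(a^*)\) are continuous in \(a\). It therefore suffices to prove \(s(x)=s(x^*)\) for \(x\in M(A)^{-1}\cap A\). Since \(M(A)^{-1}\) need not meet \(A\), I will instead work with the extended functions of Remark~\ref{SVF:extent:multiplier_algebra}; by Remark~\ref{rmk:inMA:SVFBP1}, the estimate (4) still holds for elements of \(M(A)\). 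The reduction is then: prove \(s(x)=s(x^*)\) for every invertible \(x\in M(A)\), and pass to limits.

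For invertible \(x\in M(A)\), take the polar decomposition \(x=u\abs{x}\), where \(\abs{x}=(x^*x)^{1/2}\) is invertible and \(u=x\abs{x}^{-1}\) is a unitary in \(M(A)\). Then \(x^*=\abs{x}u^*\) and \(\abs{x^*}=(xx^*)^{1/2}=u\abs{x}u^*\). Part (6) of Proposition~\ref{SVF:Basic_Property:1}, valid in \(M(A)\) by the remark, gives \(s(x)=s(u\abs{x})=s(\abs{x})\). Similarly,
\begin{equation*}
s(x^*)=s(\abs{x^*})=s(u\abs{x}u^*)=s(u\abs{x})=s(\abs{x}),
\end{equation*}
using (5) and then (6) twice (right multiplication by \(u^*\), then left multiplication by \(u\)). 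Hence \(s(x)=s(x^*)\).

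Finally, let \(a\in A\) lie in the closure of \(M(A)^{-1}\), and choose \(x_n\in M(A)^{-1}\) with \(x_n\to a\). Then for each \(g\),
\begin{equation*}
\abs{s_g(a)-s_g(a^*)}\leq\abs{s_g(a)-s_g(x_n)}+\abs{s_g(x_n^*)-s_g(a^*)}\leq2\norm{a-x_n}\to0.
\end{equation*}

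The main point to check is that (4), (5) and (6) really extend to \(M(A)\). For (4), the computation \(\norm{(a-b)(1-p)}\leq\norm{a-b}\) works verbatim. For (5), \(\norm{a-ap}^2=\norm{(1-p)a^*a(1-p)}\), so it is fine. For (6), the unitary \(u\) lies in \(M(A)\) and \(upu^*\in P(A)\) with \([upu^*]_0=[p]_0\); this holds because \(A\) is an ideal in \(M(A)\) and conjugation by a unitary multiplier preserves the \(K_0\)-class (implemented by the partial isometry \(up\in A\)).
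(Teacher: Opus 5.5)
Your proposal is correct and follows essentially the same route as the paper: approximate \(a\) by an invertible multiplier, use the Lipschitz estimate (4) for the extended functions on \(M(A)\) (Remark~\ref{rmk:inMA:SVFBP1}), and get \(s(x)=s(x^*)\) for invertible \(x\) from polar decomposition with a unitary in \(M(A)\) together with the unitary invariance (6). Your explicit checks that (4), (5) and (6) carry over to \(M(A)\) are details the paper leaves to the remark.
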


\begin{proof}
  For arbitrary \(\varepsilon>0\), 
  there exists \(\tilde{a}\in M(A)^{-1}\) 
  such that \(\norm{a-\tilde{a}}<\varepsilon/2\).
  By Proposition~\ref{SVF:Basic_Property:1}~{(4)} and Remark~\ref{rmk:inMA:SVFBP1}, 
  for any \(g\in K_0(A)^+\), 
  we obtain
  \begin{equation*}
    \abs{s_g(a)-s_g(\tilde{a})}<\varepsilon/2 \text{ and } \abs{s_g(a^*)-s_g(\tilde{a}^*)}<\varepsilon/2.
  \end{equation*}
  By Proposition~\ref{SVF:Basic_Property:1}~{(6)} and polar decomposition, 
  we have \(s(\tilde{a})=s(\tilde{a}^*)\). 
  Therefore, we obtain
  \begin{align*}
    \abs{s_g(a)-s_g(a^*)}
    &\leq\abs{s_g(a)-s_g(\tilde{a})}+\abs{s_g(\tilde{a})-s_g(\tilde{a}^*)}+\abs{s_g(\tilde{a}^*)-s_g(a^*)}\\
    &<\varepsilon/2+\varepsilon/2=\varepsilon.
  \end{align*}
  This proves \(s(a)=s(a^*)\).
\end{proof}

The following corollary is a direct consequence of the previous proposition and Proposition~\ref{SVF:Basic_Property:1}~{(3)}.

\begin{corollary}
  \label{SVF:Basic_Property:2}
  Let \(A\) be a C\(^*\)-algebra. 
  If \(A\) is contained in the closure of \(M(A)^{-1}\), then \(s(a)=s(a^*)\) for all \(a\in A\).
  In particular, \(s(ab)\leq \norm{b}s(a)\) for all \(a,b\in A\).
\end{corollary}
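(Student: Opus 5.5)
The plan is to get both parts of Corollary~\ref{SVF:Basic_Property:2} from the preceding proposition and Proposition~\ref{SVF:Basic_Property:1}~(3), using the \(*\)-invariance to move \(b\) from one side of the product to the other.

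First, suppose every element of \(A\) lies in the closure of \(M(A)^{-1}\). Fix \(a\in A\). The preceding proposition applies directly to \(a\) and gives \(s(a)=s(a^*)\). Since \(a\) is arbitrary, this holds for all \(a\in A\).

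For the inequality, fix \(a,b\in A\). Because \(ab\in A\) and \(b^*a^*\in A\), the first part gives
\begin{equation*}
  s(ab)=s\bigl((ab)^*\bigr)=s(b^*a^*).
\end{equation*}
Now apply Proposition~\ref{SVF:Basic_Property:1}~(3) with \(b^*\) in place of \(b\) and \(a^*\) in place of \(a\). This yields
\begin{equation*}
  s(b^*a^*)\leq\norm{b^*}s(a^*)=\norm{b}s(a^*).
\end{equation*}
Applying the first part once more, \(s(a^*)=s(a)\). Chaining these gives \(s(ab)\leq\norm{b}s(a)\), with the inequality holding pointwise on \(K_0(A)^+\).

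No step here is a real obstacle. The only thing to check is that the \(*\)-invariance is applied only to elements of \(A\), namely \(ab\) and \(a\). This is the case, so the hypothesis that \(A\) is contained in the closure of \(M(A)^{-1}\) is exactly what the argument needs.
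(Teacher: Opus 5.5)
Your proof is correct and follows the paper's route: the paper also treats the first part as the preceding proposition applied to each $a\in A$. It gets the inequality from Proposition~\ref{SVF:Basic_Property:1}~(3) through the chain $s(ab)=s(b^*a^*)\leq\norm{b^*}s(a^*)=\norm{b}s(a)$, which the paper leaves implicit and you have written out.
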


\begin{example}
  Let \(A\) be a C\(^*\)-algebra.
  \begin{enumerate}
    \item If \(A\) has stable rank one, then \(\tilde{A}^{-1}\) is dense in \(\tilde{A}\). 
    In particular, \(A\) is contained in the closure of \(M(A)^{-1}\).
    Thus by Corollary~\ref{SVF:Basic_Property:2}, \(s\) is \(*\)-invariant on \(A\).
    \item If \(A\) is stable, then \(s\) is \(*\)-invariant on \(A\).
    Indeed, each element of \(A\) is approximated by elements of \(M(A)^{-1}\), as in \cite[Remark 1.8]{Thomsen:94:Inductive_limits_of_interval_algebras}.
    Thus Corollary~\ref{SVF:Basic_Property:2} also holds in this case.
  \end{enumerate}
\end{example}

\subsubsection{Analogues of Ky Fan's inequalities}

In Theorem~\ref{SVF:Basic_Property:3} and Theorem~\ref{SVF:Basic_Property:4},
we prove analogues of Ky Fan's inequalities \cite[(10), (11)]{KyFan:1951:inequalities_for_eigenvalues} of the singular value function on stable C\(^*\)-algebras of stable rank one.
In the following lemma, we approximate the sum of two projections by a single projection.

\begin{lemma}
  \label{approx:Proj2ByProj1}
  Let \(p\) and \(q\) be projections in a C\(^*\)-algebra \(A\) with cancellation.
  If \(A\) is contained in the closure of \({M(A)}^{-1}\)
  and \(K_0(A)^+=D_0(A)\), 
  then for any \(\varepsilon>0\),
  \begin{enumerate}
    \item there exists \(r\in P(A)\) with \([r]_0=[p\oplus q]_0 \text{ and } \norm{(p+q)-r(p+q)}<\varepsilon\),
    \item there exists \(r\in P(A)\) with \([r]_0=[p\oplus q]_0,\;
    \norm{p-pr}<\varepsilon\text{ and }\norm{q-qr}<\varepsilon\).
  \end{enumerate}
\end{lemma}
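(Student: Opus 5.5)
The plan for (1) is to write \(p+q\) as \(cc^*\) for an element \(c\in A\) that satisfies \(c=cr'\) for a projection \(r'\) of class \([p\oplus q]_0\). Then I use the density of \(M(A)^{-1}\) to rotate \(r'\) by a unitary of \(M(A)\) onto a projection that almost contains the range of \(p+q\). Part (2) will follow from (1) by an operator inequality.

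\emph{Step 1.} Since \(K_0(A)^+=D_0(A)\), choose \(r'\in P(A)\) with \([r']_0=[p\oplus q]_0=[p]_0+[q]_0\). By cancellation, \(r'\oplus 0\sim_{\mathrm{MvN}} p\oplus q\) in \(M_2(A)\), via a partial isometry \(w\in M_2(A)\) with \(w^*w=p\oplus q\) and \(ww^*=r'\oplus 0\). Let \(v\) be the \(1\times 2\) row \((p\;\;q)\), so that \(v(p\oplus q)v^*=p+q\). Set \(z:=vw^*\), a \(1\times 2\) row. Then \(zz^*=v(p\oplus q)v^*=p+q\), and \(z=z(r'\oplus 0)\). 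Hence the second entry of \(z\) vanishes, and its first entry \(c\in A\) satisfies
\begin{equation*}
  cc^*=p+q,\qquad c=cr'.
\end{equation*}

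\emph{Step 2.} Since \(c^*\) lies in the closure of \(M(A)^{-1}\), for \(\delta>0\) pick \(t\in M(A)^{-1}\) with \(\norm{c^*-t}<\delta\). Take its polar decomposition \(t=u\abs{t}\) with \(u\in U(M(A))\).

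By continuity of \(x\mapsto (x^*x)^{1/2}\) on bounded sets, \(\abs{t}\) is close to \((cc^*)^{1/2}=(p+q)^{1/2}\) once \(\delta\) is small. Hence \(c^*\approx u(p+q)^{1/2}\). Using \(r'c^*=c^*\), we get \((1-r')u(p+q)^{1/2}\approx 0\).

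Now put \(r:=u^*r'u\). This is a projection in \(A\), because \(A\) is an ideal in \(M(A)\). It satisfies \([r]_0=[r']_0\), since \(r'u\in A\) implements \(r\sim_{\mathrm{MvN}}r'\). Multiplying by \(u^*\) on the left gives \(\norm{(1-r)(p+q)^{1/2}}\) small. Multiplying on the right by \((p+q)^{1/2}\) then makes \(\norm{(p+q)-r(p+q)}<\varepsilon\) for \(\delta\) small enough.

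\emph{Step 3.} For (2), take \(r\) from (1) with a smaller tolerance \(\varepsilon'\). Since \(0\le p\le p+q\), we have
\begin{equation*}
  (1-r)p(1-r)\le (1-r)(p+q)(1-r).
\end{equation*}
The right-hand side has norm at most \(\varepsilon'\). Therefore \(\norm{p-rp}^2=\norm{(1-r)p(1-r)}\le\varepsilon'\). Taking adjoints gives \(\norm{p-pr}\le\sqrt{\varepsilon'}\), and the same argument applies to \(q\). Choosing \(\varepsilon'=\varepsilon^2/4\) suffices.

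The main obstacle is Step 2. The approximation is via \(c^*\), not \(c\), so I must make sure that the unitary produced by the polar decomposition acts on the correct side. I also need to check that the error in \(\abs{t}\approx (p+q)^{1/2}\) is controlled uniformly, which holds because the square root is uniformly continuous on bounded sets. Everything else reduces to cancellation and the hypothesis \(K_0(A)^+=D_0(A)\), which gives the exact class \([r]_0=[p\oplus q]_0\) rather than an inequality.
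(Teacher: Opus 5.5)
Your proof is correct. Its skeleton is the paper's: choose $r'$ with $[r']_0=[p\oplus q]_0$, produce an element $x$ with $x^*x$ equal (or close) to $p+q$ and $xx^*\in r'Ar'$, approximate $x$ by an invertible element of $M(A)$, and conjugate $r'$ by the unitary from its polar decomposition. Part (2) is the same operator-inequality argument in both.

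The difference lies in how you obtain $x$.

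\emph{The paper's route.} It goes through Cuntz comparison. Lemma~2.8(ii) of Kirchberg--R{\o}rdam gives $p+q\lesssim r'$, and Lemma~2.7(iii) then yields only $(p+q-\delta)_+=x^*x$ with $xx^*=r_0\in r'Ar'$. The final estimate therefore carries an extra cut-down error. It also has to compare $y^*y$ with $(p+q-\delta)_+$ and $yy^*$ with $r_0$.

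\emph{Your route.} You build $x=c^*$ explicitly from the partial isometry implementing $r'\oplus 0\sim_{\mathrm{MvN}}p\oplus q$ and the row $(p\;\;q)$. This gives the exact relations $cc^*=p+q$ and $c=cr'$, so no cut-down is needed. You then control the error through $\abs{t}\approx(p+q)^{1/2}$, using continuity of the square root (for instance $\norm{a^{1/2}-b^{1/2}}\leq\norm{a-b}^{1/2}$ for positive $a,b$).

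\emph{Trade-off.} Your version is self-contained and avoids the external comparison lemmas. The paper's version is more flexible: Lemma~2.7(iii) applies to any positive element that is Cuntz-subequivalent to $r'$, not only to one that is exactly equivalent to an element of $r'Ar'$.
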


\begin{proof}
  (1)
  Take \(\varepsilon>0\) arbitrary and set \(\delta:=\varepsilon/3\).
  Since \(K_0(A)^+=D_0(A)=\{[p]_0\mid p\in P(A)\}\) 
  and \(A\) has cancellation, 
  there exists \(r'\in P(A)\) with \(r'\sim_{\mathrm{MvN}}p\oplus q\).
  Applying 
  \cite[Lemma~2.8~{(ii)}]{Kirchberg.Rordam:2000:non-simple.purely.infinite.C*-algebras}
  to \(r'\sim_{\mathrm{MvN}}p\oplus q\), 
  we obtain \(
    p+q\lesssim r'\).
  By \cite[Lemma~2.7~{(iii)}]{Kirchberg.Rordam:2000:non-simple.purely.infinite.C*-algebras}, 
  there exists a positive element \(r_0\) in the closed subalgebra \(r'Ar'\) of \(A\) such that \((p+q-\delta)_+\sim_{\mathrm{MvN}}r_0\): 
  there exists \(x\in A\) with \((p+q-\delta)_+=x^*x \text{ and } r_0=xx^*\).

  Since \(A\) is contained in the closure of \({M(A)}^{-1}\),
  there exists \(y\in {M(A)}^{-1}\)
  with
  \begin{align*}
    \norm{x-y}<(\norm{x}^2+\delta)^\frac{1}{2}-\norm{x}.
  \end{align*}
  Then, we have 
  \(\norm{y}\leq\norm{x}+\norm{y-x}<(\norm{x}^2+\delta)^\frac{1}{2}\)
  and
  \begin{align*}
    \norm{yy^*-r_0}
    &=\norm{yy^*-xx^*}=\norm{yy^*-yx^*+yx^*-xx^*}\\
    &\leq(\norm{y}+\norm{x})\norm{y-x}\\
    &<\left((\norm{x}^2+\delta)^\frac{1}{2}+\norm{x}\right)\left((\norm{x}^2+\delta)^\frac{1}{2}-\norm{x}\right)
    =\delta.
  \end{align*}
  Hence, we have \(\norm{yy^*-r_0}<\delta\)
  and similarly, \(\norm{(p+q-\delta)_+-y^*y}<\delta\). 
  Notice that \(yy^*\) is unitarily equivalent to \(y^*y\): 
  there exists \(u\in U(M(A))\) with \(y^*y=uyy^*u^*\). 
  
  Set \({r}:=u{r'}u^*\in P(A)\). We have
  \begin{align*}
    [r]_0=[r']_0=[p\oplus q]_0.
  \end{align*}
  It remains to show \(\norm{(p+q)-r(p+q)}<\varepsilon\).
  Using \((1_{M(A)}-r')r_0=0\), we have
  \begin{align*}
    (1_{M(A)}-r)y^*y
    &=u(1_{M(A)}-{r'})u^*uyy^*u^*\\
    &=u(1_{M(A)}-{r'})yy^*u^*\\
    &=u(1_{M(A)}-{r'})(yy^*-r_0)u^*.
  \end{align*}
  In particular, \(\norm{(1_{M(A)}-r)y^*y}\leq\norm{yy^*-r_0}<\delta\).
  Hence we obtain
  \begin{align*}
    &\norm{(p+q)-{r}(p+q)}\\
    &=\norm{(1_{M(A)}-r)(p+q)}\\
    &=\norm{
      (1_{M(A)}-r)
      \left(
        (p+q)-(p+q-\delta)_++(p+q-\delta)_+-y^*y+y^*y
      \right)
    }\\
    &\leq\norm{(1_{M(A)}-r)((p+q)-(p+q-\delta)_+)}\\
    &\quad+\norm{(1_{M(A)}-r)((p+q-\delta)_+-y^*y)}\\
    &\qquad+\norm{(1_{M(A)}-r)y^*y}\\
    &\leq\norm{(p+q)-(p+q-\delta)_+}+\norm{(p+q-\delta)_+-y^*y}+\norm{yy^*-r_0}\\
    &<3\delta=\varepsilon.
  \end{align*}

  \medbreak

  (2)
  By (1), there exists a projection \(r\) in \(A\) such that 
  \begin{align*}
    [r]_0=[p\oplus q]_0 \text{ and } \norm{(p+q)-r(p+q)}<\varepsilon^2.
  \end{align*}
  Using \(p\leq p+q\), we have
  \begin{align*}
    \norm{p-pr}^2
    &=\norm{(1_{M(A)}-r)p(1_{M(A)}-r)}\\
    &\leq\norm{(1_{M(A)}-r)(p+q)(1_{M(A)}-r)}\\
    &\leq\norm{(1_{M(A)}-r)(p+q)}<\varepsilon^2.
  \end{align*}
  We thus get \(\norm{p-pr}<\varepsilon\) and similarly \(\norm{q-qr}<\varepsilon\).
\end{proof}

The following theorem is one of the main results of this paper.

\begin{theorem}
  \label{SVF:Basic_Property:3}
  Let \(A\) be a C\(^*\)-algebra with cancellation.
  If \(A\) is contained in the closure of \({M(A)}^{-1}\) and \(K_0(A)^+=D_0(A)\), then 
  \begin{align*}
    s_{g+h}(a+b)\leq s_g(a)+s_h(b),
  \end{align*}
  for \(a,b\in A,\;g,h\in K_0(A)^+\).
\end{theorem}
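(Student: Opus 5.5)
Fix \(a,b\in A\), \(g,h\in K_0(A)^+\) and \(\varepsilon>0\). By the definition of \(s\), choose projections \(p,q\in P(A)\) with \([p]_0\leq g\), \([q]_0\leq h\), \(\norm{a-ap}<s_g(a)+\varepsilon\) and \(\norm{b-bq}<s_h(b)+\varepsilon\). The aim is a single projection \(r\in P(A)\) with \([r]_0\leq g+h\) that almost dominates both \(p\) and \(q\) from the right. Lemma~\ref{approx:Proj2ByProj1}~(2) supplies exactly this under our hypotheses: there is \(r\in P(A)\) with \([r]_0=[p\oplus q]_0=[p]_0+[q]_0\leq g+h\), \(\norm{p-pr}<\delta\) and \(\norm{q-qr}<\delta\), where \(\delta>0\) will be fixed later.

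Next I estimate \(\norm{(a+b)-(a+b)r}=\norm{(a+b)(1-r)}\), working in \(\tilde{A}\) or \(M(A)\). Write
\begin{equation*}
  a(1-r)=a(1-p)(1-r)+ap(1-r).
\end{equation*}
The first term has norm at most \(\norm{a-ap}\), since \(\norm{1-r}\leq 1\). For the second, \(\norm{ap(1-r)}\leq\norm{a}\norm{p-pr}<\norm{a}\delta\). Treating \(b\) the same way gives
\begin{equation*}
  s_{g+h}(a+b)\leq\norm{(a+b)(1-r)}\leq \norm{a-ap}+\norm{b-bq}+(\norm{a}+\norm{b})\delta
  < s_g(a)+s_h(b)+2\varepsilon+(\norm{a}+\norm{b})\delta.
\end{equation*}
Taking \(\delta\) small, for example \(\delta=\varepsilon/(1+\norm{a}+\norm{b})\), and then letting \(\varepsilon\to 0\) yields the inequality.

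The real difficulty is that \(K_0(A)^+\) need not be a lattice, so we cannot simply take \(p\vee q\) as in the von Neumann case. That obstacle is already absorbed by Lemma~\ref{approx:Proj2ByProj1}, which relies on cancellation, density of \(M(A)^{-1}\), and \(K_0(A)^+=D_0(A)\). Given that lemma, the only points to check are the following.
\begin{enumerate}
  \item The monotonicity \([r]_0=[p]_0+[q]_0\leq g+h\) uses only that the order on \(K_0(A)\) is compatible with addition.
  \item The order of quantifiers is correct: \(\delta\) must be fixed before \(r\) is chosen, and \(p,q\) are chosen before \(\delta\).
\end{enumerate}
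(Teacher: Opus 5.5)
Your proposal is correct and follows the paper's proof. Both arguments pick near-optimal \(p,q\), then use Lemma~\ref{approx:Proj2ByProj1}~(2) to merge them into a single projection \(r\) with \([r]_0=[p]_0+[q]_0\leq g+h\). The only difference is the final estimate: you bound \(\norm{a-ar}\) directly via \(a(1-r)=a(1-p)(1-r)+ap(1-r)\), whereas the paper passes through \(\norm{p-rpr}<\varepsilon\) and the operator inequality \(1-r\leq 1-rpr\), so your version is slightly simpler and gives a linear rather than square-root error term.
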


\begin{proof}

  For each \(\varepsilon>0\)
  there exist \(p,q\in P(A)\) such that 
  \begin{align*}
    [p]_0\leq g,\;[q]_0\leq h,\; \norm{a-ap}\leq s_g(a)+\varepsilon \text{ and } \norm{b-bq}\leq s_h(b)+\varepsilon.
  \end{align*}
  By Lemma~\ref{approx:Proj2ByProj1}~{(2)}, 
  there exists \(r\in P(A)\) 
  such that 
  \begin{align*}
    \norm{p-pr}<\varepsilon/2,\; \norm{q-qr}<\varepsilon/2 \text{ and } [r]_0=[p\oplus q]_0.
  \end{align*}
  Hence, we have
  \begin{align*}
    \norm{p-rpr}=\norm{p-pr+(p-rp)r}\leq\norm{p-pr}+\norm{(p-pr)^*}\norm{r}<\varepsilon.
  \end{align*}

  Considering \(rpr\leq r\), in particular, \(1_{M(A)}-r\leq1_{M(A)}-rpr\), we have
  \begin{align*}
    \norm{a-ar}^2
    &=\norm{a(1_{M(A)}-r)a^*}\\
    &\leq\norm{a(1_{M(A)}-rpr)a^*}\\
    &\leq\norm{a(1_{M(A)}-p)a^*}+\norm{a(p-rpr)a^*}\\
    &<(s_g(a)+\varepsilon)^2+\norm{a}^2\varepsilon.
  \end{align*}
  Similarly, we have 
  \(\norm{b-br}^2<(s_h(b)+\varepsilon)^2+\norm{b}^2\varepsilon\).
  Therefore, 
  \begin{align*}
    s_{g+h}(a+b)
    &\leq\norm{(a+b)-(a+b)r}\\
    &\leq\norm{a-ar}+\norm{b-br}\\
    &\leq\left((s_g(a)+\varepsilon)^2+\norm{a}^2\varepsilon\right)^{1/2}+\left((s_h(b)+\varepsilon)^2+\norm{b}^2\varepsilon\right)^{1/2}.
  \end{align*}
  Letting \(\varepsilon\to 0\), we obtain
  \begin{equation*}
    s_{g+h}(a+b)\leq s_g(a)+s_h(b).
  \end{equation*}
\end{proof}

\begin{remark}
  The previous theorem holds for C\(^*\)-algebras satisfying certain favorable conditions.
  \begin{enumerate}
    \item For a stable C\(^*\)-algebra with cancellation, the previous theorem holds.
    Indeed, if \(A\) is stable, then \(K_0(A)^+=D_0(A)\) 
    and \(A\) is contained in the closure of \(M(A)^{-1}\).
    \item If a C\(^*\)-algebra has stable rank one and \(K_0(A)^+=D_0(A)\), then the previous theorem holds.
    In fact, being of stable rank one implies having cancellation and \(A\) is contained in the closure of \(\tilde{A}^{-1}\).
  \end{enumerate}
\end{remark}

The following theorem is one of the main results of this paper.

\begin{theorem}
  \label{SVF:Basic_Property:4}
  Let \(A\) be a C\(^*\)-algebra with cancellation.
  If \(A\) has real rank zero and \(K_0(A)^+=D_0(A)\), then 
  \begin{align*}
    s_{g+h}(ab)\leq s_g(a)s_h(b),
  \end{align*}
  for \(a,b\in A,\;g,h\in K_0(A)^+\).
\end{theorem}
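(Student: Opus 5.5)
The plan mimics the von Neumann algebra proof. There, with projections \(p,q\) nearly optimal for \(a\) and \(b\), one takes \(r\) to be the join of \(q\) and the right support of \(pb\). Then
\begin{equation*}
  ab(1-r)=a(1-p)b(1-q)(1-r),
\end{equation*}
and \(r\) has trace at most \(\tau(p)+\tau(q)\). In \(A\) neither joins nor support projections exist, so I would replace both by projections taken from hereditary subalgebras. Real rank zero is what supplies these projections; cancellation and \(K_0(A)^+=D_0(A)\) are what let me compare their classes with \(g+h\).

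\emph{Step 1 (near-optimal projections).} Fix \(\varepsilon>0\). Choose \(p,q\in P(A)\) with \([p]_0\leq g\), \([q]_0\leq h\), \(\norm{a-ap}\leq s_g(a)+\varepsilon\) and \(\norm{b-bq}\leq s_h(b)+\varepsilon\).

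\emph{Step 2 (replacing the support of \(pb\)).} Put \(x:=b^*pb\in A^+\). Since \(A\) has real rank zero, \(\overline{xAx}\) has an approximate unit of projections. So there is a projection \(e\in\overline{xAx}\) with \(\norm{x(1-e)}<\varepsilon\), and hence
\begin{equation*}
  \norm{pb(1-e)}^2=\norm{(1-e)x(1-e)}<\varepsilon.
\end{equation*}
A projection in \(\overline{xAx}\) is Cuntz below \(x\). Moreover \(x=(pb)^*(pb)\sim_{\mathrm{MvN}}pbb^*p\in pAp\), so \(x\lesssim p\). Thus \(e\lesssim p\) as projections and \([e]_0\leq[p]_0\leq g\).

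\emph{Step 3 (replacing the join of \(e\) and \(q\)).} The hereditary algebra \(\overline{(e+q)A(e+q)}\) again has real rank zero. Pick a projection \(r\) in it with \(\norm{(e+q)(1-r)}<\varepsilon\). Then \(r\lesssim e+q\lesssim e\oplus q\), the last relation by \cite[Lemma~2.8]{Kirchberg.Rordam:2000:non-simple.purely.infinite.C*-algebras}. Because \(K_0(A)^+=D_0(A)\) and \(A\) has cancellation, \(e\oplus q\) is equivalent to a projection of \(A\). Hence \(r\) is subordinate to it and \([r]_0\leq[e]_0+[q]_0\leq g+h\). The argument of Lemma~\ref{approx:Proj2ByProj1}~(2) gives
\begin{equation*}
  \norm{e(1-r)}^2\leq\norm{(1-r)(e+q)(1-r)}<\varepsilon,
\end{equation*}
and likewise \(\norm{q(1-r)}^2<\varepsilon\).

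\emph{Step 4 (estimate).} Write
\begin{equation*}
  ab(1-r)=a(1-p)b(1-q)(1-r)+a(1-p)bq(1-r)+apb(1-e)(1-r)+apbe(1-r),
\end{equation*}
where all units are taken in \(\tilde A\). The first term has norm at most \((s_g(a)+\varepsilon)(s_h(b)+\varepsilon)\). The other three are bounded by \(\norm{a}\norm{b}\) times \(\norm{q(1-r)}\), \(\norm{pb(1-e)}/\norm{b}\) and \(\norm{e(1-r)}\) respectively, and all of these are \(O(\varepsilon^{1/2})\). Since \([r]_0\leq g+h\), letting \(\varepsilon\to0\) gives
\begin{equation*}
  s_{g+h}(ab)\leq s_g(a)s_h(b).
\end{equation*}

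The main obstacle is the rank bookkeeping in Steps 2 and 3: proving \([e]_0\leq[p]_0\) and \([r]_0\leq[e]_0+[q]_0\). This needs the fact that a projection lying in \(\overline{xAx}\) is Cuntz below \(x\), together with the fact that Cuntz subequivalence of projections agrees with subordination once \(e\oplus q\) is realized inside \(A\). For the first fact I would argue as in \cite[Lemma~2.7]{Kirchberg.Rordam:2000:non-simple.purely.infinite.C*-algebras}: since \(e\in\overline{xAx}\), we have \(e\approx f_\delta(x)\,e\,f_\delta(x)\) for a suitable continuous cutoff \(f_\delta\), which shows \(e\lesssim x\).
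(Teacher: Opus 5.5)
Your proof is correct. Steps 1, 2 and 4 coincide with the paper's proof: the paper also picks a projection \(q_\varepsilon\in\overline{rAr}\) with \(r=b^*p_1b\), shows \(q_\varepsilon\lesssim p_1\), and splits \(ab(1-p)\) into the same four terms.

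The real difference is Step 3. The paper invokes Lemma~\ref{approx:Proj2ByProj1}~(2). It first asserts that real rank zero together with cancellation gives stable rank one, so invertibles are dense. It then uses \(K_0(A)^+=D_0(A)\), cancellation and a unitary conjugation to produce a projection whose class is exactly \([q_\varepsilon]_0+[p_2]_0\) and which almost dominates both \(q_\varepsilon\) and \(p_2\). You instead apply real rank zero a second time, in \(\overline{(e+q)A(e+q)}\). That only yields \([r]_0\le[e]_0+[q]_0\), but this is all the estimate needs. Your route is shorter and avoids the detour through invertibles. Your error terms are \(O(\varepsilon^{1/2})\) rather than \(O(\varepsilon)\), which is harmless.

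Your argument in fact gives more than you claim. The relation \(r\oplus 0\lesssim (e+q)\oplus 0\lesssim e\oplus q\) is a Cuntz relation between projections of \(M_2(A)\). It is therefore already a subordination in \(M_2(A)\), so \([r]_0\le[e]_0+[q]_0\) holds in \(K_0(A)\) without realizing \(e\oplus q\) inside \(A\). Likewise \(e\lesssim b^*pb\lesssim p\) gives \([e]_0\le[p]_0\) without cancellation. So the appeal to \(K_0(A)^+=D_0(A)\) and cancellation in your Step 3 can be dropped, and your proof establishes the multiplicative inequality for every real rank zero C\(^*\)-algebra.

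What the paper's lemma buys in exchange is independence from real rank zero. That is why the same lemma also serves Theorem~\ref{SVF:Basic_Property:3}, where real rank zero is not assumed.
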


\begin{proof}
  For any \(\varepsilon>0\),
  there exist \(p_1,p_2\in P(A)\) such that 
  \begin{equation}
    \label{ineq:KyFan6:SVFof2}
    [p_1]_0\leq g,\; [p_2]_0\leq h, \;\norm{a-ap_1}\leq s_g(a)+\varepsilon,\; \norm{b-bp_2}\leq s_h(b)+\varepsilon.
  \end{equation}

  We set a positive element \(r:=b^*p_1b\).
  Since \(A\) has real rank zero, the hereditary subalgebra \(\overline{rAr}\) of \(A\) has an approximate unit that consists of projections. 
  Hence there exists a projection \(q_\varepsilon\) in \(\overline{rAr}\subset A\) such that \(\norm{r-r{q_{\varepsilon}}}\leq\varepsilon^2\).
  Then we obtain 
  \begin{equation}
    \label{ineq:KyFan6:hered}
    \norm{p_1b(1_{\tilde{A}}-{q_{\varepsilon}})}<\varepsilon.
  \end{equation}
  Indeed,
  \begin{align*}
    \norm{p_1b(1_{\tilde{A}}-{q_{\varepsilon}})}^2
    &=\norm{(1_{\tilde{A}}-{q_{\varepsilon}})r(1_{\tilde{A}}-{q_{\varepsilon}})}\\
    &\leq\norm{r(1_{\tilde{A}}-{q_{\varepsilon}})}\leq\varepsilon^2.
  \end{align*}

  Since \(A\) has real rank zero and cancellation, \(A\) has stable rank one and thus \(A\) is contained in the closure of \(\tilde{A}^{-1}\).
  By Lemma~\ref{approx:Proj2ByProj1}~{(2)}, 
  there exists a projection \(p\) in \(A\)
  such that 
  \begin{equation}
    \label{ineq:KyFan6:Proj2ByProj1}
    [p]_0=[{q_{\varepsilon}}]_0+[p_2]_0, \; \norm{{q_{\varepsilon}}-{q_{\varepsilon}}p}<\varepsilon \text{ and } \norm{p_2-p_2p}<\varepsilon.
  \end{equation}
  We have
  \begin{align*}
    &ab(1_{\tilde{A}}-p)-a(1_{\tilde{A}}-p_1)b(1_{\tilde{A}}-p_2)(1_{\tilde{A}}-p)\\
    &=a
    \left(
      b-(1_{\tilde{A}}-p_1)b(1_{\tilde{A}}-p_2)
    \right)
    (1_{\tilde{A}}-p)\\
    &=a
    \left(
      b-b+p_1b-p_1bq_\varepsilon+p_1bq_\varepsilon
      +(1_{\tilde{A}}-p_1)bp_2
    \right)
    (1_{\tilde{A}}-p)\\
    &=a
    \left(
      p_1b(1_{\tilde{A}}-{q_{\varepsilon}})+p_1b{q_{\varepsilon}}+(1_{\tilde{A}}-p_1)bp_2
    \right)
    (1_{\tilde{A}}-p)\\
    &=ap_1b(1_{\tilde{A}}-{q_{\varepsilon}})(1_{\tilde{A}}-p)+ap_1b{q_{\varepsilon}}(1_{\tilde{A}}-p)+a(1_{\tilde{A}}-p_1)bp_2(1_{\tilde{A}}-p).
  \end{align*}
  Thus we obtain
  \begin{align*}
    \norm{ab-abp}
    &\leq\norm{a(1_{\tilde{A}}-p_1)b(1_{\tilde{A}}-p_2)(1_{\tilde{A}}-p)}+\norm{ap_1b(1_{\tilde{A}}-{q_{\varepsilon}})(1_{\tilde{A}}-p)}\\
    &\quad+\norm{ap_1b{q_{\varepsilon}}(1_{\tilde{A}}-p)}+\norm{a(1_{\tilde{A}}-p_1)bp_2(1_{\tilde{A}}-p)}\\
    &\leq\norm{a(1_{\tilde{A}}-p_1)}\norm{b(1_{\tilde{A}}-p_2)}\norm{1_{\tilde{A}}-p}+\norm{a}\norm{p_1b(1_{\tilde{A}}-{q_{\varepsilon}})}\norm{1_{\tilde{A}}-p}\\
    &\quad+\norm{a}\norm{p_1}\norm{b}\norm{{q_{\varepsilon}}(1_{\tilde{A}}-p)}+\norm{a}\norm{1_{\tilde{A}}-p_1}\norm{b}\norm{p_2(1_{\tilde{A}}-p)}\\
    &\leq\norm{a(1_{\tilde{A}}-p_1)}\norm{b(1_{\tilde{A}}-p_2)}+\norm{a}\norm{p_1b(1_{\tilde{A}}-{q_{\varepsilon}})}\\
    &\quad+\norm{a}\norm{b}\norm{{q_{\varepsilon}}(1_{\tilde{A}}-p)}+\norm{a}\norm{b}\norm{p_2(1_{\tilde{A}}-p)}\\
    (\text{by}~\eqref{ineq:KyFan6:SVFof2})\qquad
    &\leq(s_g(a)+\varepsilon)(s_h(b)+\varepsilon)+\norm{a}\norm{p_1b(1_{\tilde{A}}-{q_{\varepsilon}})}\\
    &\quad+\norm{a}\norm{b}\norm{{q_{\varepsilon}}(1_{\tilde{A}}-p)}+\norm{a}\norm{b}\norm{p_2(1_{\tilde{A}}-p)}\\
    (\text{by}~\eqref{ineq:KyFan6:hered})\qquad
    &\leq(s_g(a)+\varepsilon)(s_h(b)+\varepsilon)+\norm{a}\varepsilon\\
    &\quad+\norm{a}\norm{b}\norm{{q_{\varepsilon}}(1_{\tilde{A}}-p)}+\norm{a}\norm{b}\norm{p_2(1_{\tilde{A}}-p)}\\
    (\text{by}~\eqref{ineq:KyFan6:Proj2ByProj1})\qquad
    &<(s_g(a)+\varepsilon)(s_h(b)+\varepsilon)+\norm{a}\varepsilon+2\norm{a}\norm{b}\varepsilon.
  \end{align*}

  Since \({q_{\varepsilon}}\) is a projection in \(\overline{rAr}\),
  we have \({q_{\varepsilon}}\lesssim r\) by \cite[Lemma~2.7~(i)]{Kirchberg.Rordam:2000:non-simple.purely.infinite.C*-algebras} and \(r\lesssim p_1\) by definition. 
  Thus we obtain \({q_{\varepsilon}}\lesssim p_1\),
  in particular, \([{q_{\varepsilon}}]_0\leq[p_1]_0\).

  We also have \([p]_0=[q_\varepsilon]_0+[p_2]_0\) by \eqref{ineq:KyFan6:Proj2ByProj1}.
  Since \([p]_0\leq[p_1]_0+[p_2]_0 \leq g+h\),
  \begin{equation*}
    s_{g+h}(ab)\leq (s_g(a)+\varepsilon)(s_h(b)+\varepsilon)+\norm{a}\varepsilon+2\norm{a}\norm{b}\varepsilon.
  \end{equation*}
  Letting \(\varepsilon\to0\), we obtain 
  \begin{equation*}
    s_{g+h}(ab)\leq s_g(a)s_h(b)
  \end{equation*}
  as desired.
\end{proof}

\begin{remark}
  For a stable C\(^*\)-algebra \(A\) of real rank zero and with cancellation, the previous theorem holds.
  In fact, \(K_0(A)^+=D_0(A)\) since \(A\) is stable.
\end{remark}

\subsubsection{Monotonicity}

For real rank zero C\(^*\)-algebras,
we can extend 
Proposition~\ref{SVF:Basic_Property:1}~{(7)} to all positive elements.

\begin{proposition}
  \label{SVF:Basic_Property:5}
  Let \(a\) be a positive element in a real rank zero C\(^*\)-algebra \(A\)
  and \(f\colon \mathbb{R}^+\to\mathbb{R}^+\) be a continuous increasing function
  with \(f(0)=0\).
  Then 
  \begin{equation*}
    s(f(a))=f(s(a)).
  \end{equation*}
\end{proposition}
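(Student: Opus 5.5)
The plan is to reduce to Proposition~\ref{SVF:Basic_Property:1}~(7) by approximating \(a\) in norm by positive elements with finite spectrum, and then pass to the limit using the Lipschitz estimate of Proposition~\ref{SVF:Basic_Property:1}~(4) together with uniform continuity of \(f\) on a compact interval.

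First, since \(A\) has real rank zero, so does its unitization and every hereditary subalgebra. In particular the self-adjoint elements with finite spectrum are dense in \(A_{sa}\). I want finite-spectrum approximants that are positive and lie in \(A\) itself (not merely in \(\tilde A\)). To get this, I work in the hereditary subalgebra \(\overline{aAa}\), which has real rank zero. There I choose a self-adjoint finite-spectrum \(b\) with \(\norm{a-b}<\delta\). Replacing \(b\) by its positive part \(b_+\), which still has finite spectrum, lies in \(A\), and satisfies \(\norm{a-b_+}\leq\norm{a-b}<\delta\) because \(a\geq0\), gives a positive finite-spectrum element \(a_\delta\in A^+\) with \(\norm{a-a_\delta}<\delta\) and \(\norm{a_\delta}\leq\norm{a}+1\).

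Next, let \(M:=\norm{a}+1\). By Proposition~\ref{SVF:Basic_Property:1}~(7), \(s(f(a_\delta))=f(s(a_\delta))\). By Proposition~\ref{SVF:Basic_Property:1}~(4), \(\abs{s_g(a)-s_g(a_\delta)}\leq\delta\) for every \(g\in K_0(A)^+\), and all these values lie in \([0,M]\), since \(s_g(x)\leq\norm{x}\) (take \(p=0\)). Since \(f\) is uniformly continuous on \([0,M]\), we get \(\abs{f(s_g(a))-f(s_g(a_\delta))}\leq\omega_f(\delta)\), where \(\omega_f\) is the modulus of continuity of \(f\) on \([0,M]\).

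For the other side I need \(\norm{f(a)-f(a_\delta)}\to0\) as \(\delta\to0\). This is the standard continuity of continuous functional calculus on bounded sets. Approximate \(f\) uniformly on \([0,M]\) by polynomials \(P\) with \(P(0)=0\), which is possible since \(f(0)=0\). Polynomials are norm-continuous on the ball of radius \(M\), so an \(\varepsilon/3\) argument gives the claim. Applying Proposition~\ref{SVF:Basic_Property:1}~(4) again then yields \(\abs{s_g(f(a))-s_g(f(a_\delta))}\leq\norm{f(a)-f(a_\delta)}\).

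Combining, for every \(g\),
\[
\abs{s_g(f(a))-f(s_g(a))}\leq\norm{f(a)-f(a_\delta)}+\omega_f(\delta),
\]
and letting \(\delta\to0\) gives \(s(f(a))=f(s(a))\). I expect the only delicate point to be producing positive finite-spectrum approximants inside \(A\) in the non-unital case, which the hereditary subalgebra together with the positive part trick handles. The remaining steps are routine.
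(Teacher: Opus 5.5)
Your strategy is the same as the paper's. You approximate \(a\) by positive finite-spectrum elements and apply Proposition~\ref{SVF:Basic_Property:1}~(7) to them. You then pass to the limit using the Lipschitz estimate (4), uniform continuity of \(f\) on a bounded interval, and polynomial approximation of \(f\) with zero constant term to control \(\norm{f(a)-f(a_\delta)}\). The paper simply asserts that positive finite-spectrum approximants exist. Your attempt to justify this contains a false step.

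You claim \(\norm{a-b_+}\leq\norm{a-b}\) for \(a\geq 0\) and \(b\) self-adjoint. This fails in general, because \(b\mapsto b_+\) is not \(1\)-Lipschitz in operator norm, even towards positive elements. For instance, take
\[
a=\begin{pmatrix}1&1\\1&1\end{pmatrix}\geq 0,\qquad b=\operatorname{diag}(10,-1),\qquad b_+=\operatorname{diag}(10,0).
\]
Then \(\norm{a-b}=(7+\sqrt{125})/2\approx 9.090\), while \(\norm{a-b_+}=(8+\sqrt{104})/2\approx 9.099\).

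The step is easy to repair. Put \(\varepsilon:=\norm{a-b}\). Then \(b\geq a-\varepsilon\geq-\varepsilon\), so \(\sigma(b)\subset[-\varepsilon,\infty)\) and \(\norm{b_-}\leq\varepsilon\). Hence \(\norm{a-b_+}\leq\norm{a-b}+\norm{b_-}\leq 2\varepsilon\). Alternatively, approximate \(a^{1/2}\) by a self-adjoint finite-spectrum \(c\) and use \(c^2\).

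With \(2\delta\) in place of \(\delta\) (and \(\omega_f(2\delta)\) in the final estimate), the rest of your argument goes through unchanged. The detour through \(\overline{aAa}\) does no harm, but it does not address the non-unital issue. What you actually use is the Brown--Pedersen fact that finite-spectrum self-adjoint elements of a real rank zero algebra lie densely in its self-adjoint part; this fact applies to \(A\) directly.
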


\begin{proof}
  Since \(A\) has real rank zero,
  there exists a sequence of positive elements \(\{{\tilde{a}_n}\}_{n\in\mathbb{N}}\) in \(A\) 
  with finite spectrum and \(\norm{a-{\tilde{a}_n}}<1/(n+1)\).
  Since \(s(a)\) and \(s({\tilde{a}_n})\) are nonnegative and decreasing, 
  \(f\) is uniformly continuous on their bounded ranges.
  Note that 
  \begin{align*}
    \norm{f(s({\tilde{a}_n}))-f(s(a))}&=\sup_{g\in K_0(A)^+}\abs{f(s_g({\tilde{a}_n}))-f(s_g(a))},\\
    \abs{s_g({\tilde{a}_n})-s_g(a)}&\leq\norm{s({\tilde{a}_n})-s(a)}\leq\norm{{\tilde{a}_n}-a}<1/(n+1),
  \end{align*}
  by Proposition~\ref{SVF:Basic_Property:1}~{(4)}.
  Fix \(\varepsilon>0\).
  There exists \(n_1\in\mathbb{N}\) such that
  \begin{equation*}
    \norm{f(s({\tilde{a}_n}))-f(s(a))}<\varepsilon, \text{ if } n\geq n_1.
  \end{equation*}
  Set \(M:=\sup_{n\in\mathbb{N}}\norm{\tilde{a}_n}\).
  There exists a polynomial \(\tilde{f}\) 
  with \(\tilde{f}(0)=0\), \(\norm{f-\tilde{f}}_{[0,M]}<\varepsilon/3\),
  and there exists \(n_2\in\mathbb{N}\) 
  such that 
  \(\norm{\tilde{f}(a)-\tilde{f}(\tilde{a}_n)}<\varepsilon/3\)
  if \(n\geq n_2\).
  By Proposition~\ref{SVF:Basic_Property:1}~{(4)} and the functional calculus, 
  if \(n\geq n_2\), then
  \begin{align*}
    \norm{s(f(a))-s(f({\tilde{a}_n}))}
    &\leq\norm{f(a)-f({\tilde{a}_n})}\\
    &\leq\norm{f(a)-\tilde{f}(a)}+\norm{\tilde{f}(a)-\tilde{f}(\tilde{a}_n)}+\norm{\tilde{f}(\tilde{a}_n)-f({\tilde{a}_n})}
    <\varepsilon.
  \end{align*}
  
  By Proposition~\ref{SVF:Basic_Property:1}~{(7)}, if \(n\geq \max\{n_1,n_2\}\), then
  \begin{equation*}
    \norm{s(f(a))-f(s(a))}\leq\norm{s(f(a))-s(f({\tilde{a}_n}))}+\norm{f(s({\tilde{a}_n}))-f(s(a))}<2\varepsilon.
  \end{equation*}
  Letting \(\varepsilon\to 0\), we obtain \(s(f(a))=f(s(a))\).
\end{proof}

As an application of the previous proposition, the following holds.

\begin{proposition}
  \label{SVF:Basic_Property:6}
  Let \(a\) and \(b\) be elements in a C\(^*\)-algebra \(A\) of real rank zero.
  The following holds.
  \begin{enumerate}
    \item If \(s(a^*)=s(a)\), then \(s(a^*a)=s(aa^*)\).
    \item For \(a, b\in A^+\), if \(s(b^{1/2}a^{1/2})=s(a^{1/2}b^{1/2})\), then \(s(a^{1/2}ba^{1/2})=s(b^{1/2}ab^{1/2})\).
    \item If \(0\leq a\leq b\), then \(s(a)\leq s(b)\).
  \end{enumerate}
\end{proposition}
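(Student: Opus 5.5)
The three parts all rest on the identity \(s(x)=s(\abs{x})\) (Proposition~\ref{SVF:Basic_Property:1}~(5)) together with the functional calculus result Proposition~\ref{SVF:Basic_Property:5}, applied with \(f(t)=t^2\) or \(f(t)=t^{1/2}\). Both of these functions are continuous, increasing, and vanish at \(0\), and they are inverse to each other on \(\mathbb{R}^+\).

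\emph{Part (1).} Since \(\abs{a}=(a^*a)^{1/2}\) is positive, Proposition~\ref{SVF:Basic_Property:5} with \(f(t)=t^2\) gives
\[
s(a^*a)=s(\abs{a}^2)=s(\abs{a})^2=s(a)^2 .
\]
In the same way, \(s(aa^*)=s(\abs{a^*})^2=s(a^*)^2\). The hypothesis \(s(a^*)=s(a)\) then yields \(s(a^*a)=s(aa^*)\).

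\emph{Part (2).} Put \(x:=b^{1/2}a^{1/2}\). Then \(x^*x=a^{1/2}ba^{1/2}\) and \(xx^*=b^{1/2}ab^{1/2}\). We have \(x^*=a^{1/2}b^{1/2}\), so the hypothesis reads \(s(x)=s(x^*)\). Part (1) applied to \(x\) gives the claim directly.

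\emph{Part (3).} Set \(x:=a^{1/2}\) and \(y:=b^{1/2}\). Then \(x^*x=a\le b=y^*y\), so Proposition~\ref{SVF:Basic_Property:1}~(8) gives \(s(a^{1/2})\le s(b^{1/2})\). Applying Proposition~\ref{SVF:Basic_Property:5} with \(f(t)=t^2\) to the positive elements \(a^{1/2}\) and \(b^{1/2}\) gives
\[
s(a)=s(a^{1/2})^2\le s(b^{1/2})^2=s(b).
\]
Pointwise squaring preserves the inequality because all values are nonnegative.

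The only point needing care is that Proposition~\ref{SVF:Basic_Property:5} requires \(f\) to be continuous and increasing with \(f(0)=0\) on all of \(\mathbb{R}^+\), which \(t\mapsto t^2\) satisfies. It must also be applied to genuinely positive elements, and \(\abs{a}\), \(\abs{a^*}\) and \(a^{1/2}\) are all positive. Real rank zero of \(A\) is used only through Proposition~\ref{SVF:Basic_Property:5}. I expect no real obstacle: the argument is routine bookkeeping once one recognizes that \(x^*x\) and \(xx^*\) are \(\abs{x}^2\) and \(\abs{x^*}^2\).
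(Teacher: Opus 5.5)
Your proposal is correct and follows the paper's proof essentially verbatim. Part (1) uses \(s(a^*a)=s(\abs{a})^2=s(a)^2\) and the analogous identity for \(a^*\). Part (2) applies (1) to \(x=b^{1/2}a^{1/2}\). Part (3) applies Proposition~\ref{SVF:Basic_Property:1}~(8) to \(a^{1/2}\) and \(b^{1/2}\), then squares via Proposition~\ref{SVF:Basic_Property:5}.
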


\begin{proof}
  (1)
  By Proposition~\ref{SVF:Basic_Property:1}~{(5)} and Proposition~\ref{SVF:Basic_Property:5}, 
  \begin{equation*}
    s(a^*a)=s(\abs{a}^2)=s(\abs{a})^2=s(a)^2=s(a^*)^2=s(\abs{a^*})^2=s(\abs{a^*}^2)=s(aa^*).
  \end{equation*}

  \medbreak

  (2)
  Since \(a^{1/2}ba^{1/2}=(b^{1/2}a^{1/2})^*(b^{1/2}a^{1/2})\),
  this is a direct consequence of \((1)\).

  \medbreak

  (3)
  It follows from Proposition~\ref{SVF:Basic_Property:1}~{(8)} that \(s(a^{1/2})\leq s(b^{1/2})\).
  Using Proposition~\ref{SVF:Basic_Property:5}, we have
  \(s(a)=s(a^{1/2})^2\leq s(b^{1/2})^2=s(b)\).
\end{proof}

The singular value functions for C\(^*\)-algebras of real rank zero have good properties.

\begin{example}
  If \(a\) is an element in a C\(^*\)-algebra \(A\) of real rank zero, then \(s(a)\) is decreasing, vanishes at infinity, and is a uniform limit of step functions.

  Indeed, 
  by Proposition~\ref{SVF:Property:vanishes_at_infty}, \(s(a)\) vanishes at infinity.
  By Proposition~\ref{SVF:Example:positive_elements_with_finite_spectrum}, Proposition~\ref{SVF:Basic_Property:1}~{(4)}~and~{(5)}, \(s(a)=s(\abs{a})\) is a uniform limit of step functions.
\end{example}

\section{Realization and Continuity of Singular Value Functions}

\label{section:realization-continuity}

In this section, 
we study realization and continuity properties of singular value functions for C\(^*\)-algebras.

\subsection{Singular value functions for C\(^*\)-algebras with totally ordered \(K_0\)-groups}

In this subsection, we characterize the singular value functions for certain classes of C\(^*\)-algebras.

It is easy to see that a totally ordered group is simple if and only if it is Archimedean.
If the ordered group \((K_0(A),K_0(A)^+)\) is simple and totally ordered, 
we may regard \((K_0(A),K_0(A)^+)\) as a subset of \((\mathbb{R},\mathbb{R}^+)\).
In this case, one can formulate the notion of right-continuity for functions on \(K_0(A)^+\), 
just as for singular value functions of elements in semifinite von Neumann algebras with faithful traces.
Under these assumptions, we obtain the following theorem.

\begin{theorem}
  \label{SVF:form:simple_and_totally_ordered}
  Let \(A\) be a C\(^*\)-algebra with cancellation such that the ordered \(K_0\)-group of \(A\) is simple and totally ordered.
  If \(A\) is unital (respectively stable), then the following hold:
  \begin{enumerate}
    \item Let \(f\colon K_0(A)^+ \to \mathbb{R}^+\) be a decreasing, right-continuous function such that \(DP(\overline{f})\subset K_0(A)^+\) and \(f([1_A]_0)=0\) (respectively vanishes at infinity). There exists \(a \in A\) such that \(f = s(a)\).
    \item If \(A\) has real rank zero, then for every \(a\in A\), the singular value function \(s(a)\) is right-continuous, vanishes at infinity, and \(DP(\overline{s(a)})\subset K_0(A)^+\).
  \end{enumerate}
\end{theorem}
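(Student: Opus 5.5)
Throughout, I identify \((K_0(A),K_0(A)^+)\) with a subgroup of \((\mathbb{R},\mathbb{R}^+)\) via an order embedding; this is possible because the group is simple and totally ordered, hence Archimedean. Write \(S:=K_0(A)^+\subset\mathbb{R}^+\). In the unital case every \(g\in S\) with \(g\leq[1_A]_0\) lies in \(D_0(A)\), by cancellation and Lemma~\ref{projection:depart:sequence}~(1). In the stable case \(S=D_0(A)\).

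\textbf{Part (1).} First suppose \(f\) is a step function \(g^{f}_{F}\) with \(F=\{0=x_0<\dots<x_k\}\subset S\), and in the unital case \(x_k=[1_A]_0\). By Lemma~\ref{projection:depart:sequence}~(1) choose an increasing sequence of projections \(q_1\leq\dots\leq q_k\) with \([q_i]_0=x_i\); in the unital case take \(q_k=1_A\). Put \(p_i:=q_i-q_{i-1}\) and \(a:=\sum_i f(x_{i-1})p_i\). By Proposition~\ref{SVF:Example:positive_elements_with_finite_spectrum}, \(s_g(a)=\min\{f(x_j)\mid x_j\leq g\}\), with \(f(x_k)\) replaced by \(0\). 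Since \(S\) is totally ordered, this equals \(g^f_F(g)\).

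For general \(f\), apply Lemma~\ref{function:real:real:RCDappdbystepfunc}. In the unital case \(f([1_A]_0)=0\) plays the role of vanishing at infinity, and I add \([1_A]_0\) to each \(F_n\). This gives nested sets \(F_n\) with \(\norm{g^f_{F_n}-f}<C/2^n\). The key point is to make the realizing elements converge. Because \(F_n\subset F_{n+1}\), Lemma~\ref{projection:depart:sequence}~(2) lets me refine the chain of projections for \(F_n\) to a chain for \(F_{n+1}\) that keeps the old projections. So there is one coherent chain, and all the \(a_n\) are diagonal with respect to a common refining family of orthogonal projections. Then \(a_{n+1}-a_n\) is a finite sum of orthogonal projections with coefficients \(g^f_{F_{n+1}}(x)-g^f_{F_n}(x)\) for \(x\) in the new cells. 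Hence \(\norm{a_{n+1}-a_n}\leq\norm{g^f_{F_{n+1}}-g^f_{F_n}}<3C/2^{n+1}\), so \(a_n\) is Cauchy; let \(a=\lim a_n\). By Proposition~\ref{SVF:Basic_Property:1}~(4), \(s(a)=\lim s(a_n)=\lim g^f_{F_n}=f\) uniformly. In the stable case the projections live in \(A\), and the top element \(\max F_n\) increases, which causes no difficulty.

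\textbf{Part (2).} With real rank zero, \(s(a)=s(\abs{a})\) by Proposition~\ref{SVF:Basic_Property:1}~(5). I approximate \(\abs{a}\) by positive elements \(\tilde a_n\) with finite spectrum, so that \(s(\tilde a_n)\to s(\abs a)\) uniformly by (4). Vanishing at infinity follows from Proposition~\ref{SVF:Property:vanishes_at_infty}~(2), since real rank zero algebras have an approximate unit of projections. By Proposition~\ref{SVF:Example:positive_elements_with_finite_spectrum}, each \(s(\tilde a_n)\) is a decreasing step function of the form \(g\mapsto\min\{\alpha_k\mid[\hat p_k]_0\leq g\}\). Such a function is constant on the intervals \([[\hat p_k]_0,[\hat p_{k+1}]_0)\cap S\), which are closed on the left. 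It is therefore right-continuous, and its jumps occur only at the points \([\hat p_k]_0\in S\), so \(DP(\overline{s(\tilde a_n)})\subset S\). Lemma~\ref{uniform_limit:RC:DP} then transfers right-continuity and the jump condition to \(s(a)\).

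The main obstacle is the Cauchy estimate in part (1), specifically the compatibility of the projection chains across \(n\). A naive choice of projections for each \(F_n\) separately gives elements with the right singular value functions but no norm convergence. The remedy is to use the refinement statement of Lemma~\ref{projection:depart:sequence}~(2) inductively, so that each chain extends the previous one, and then to bound \(\norm{a_{n+1}-a_n}\) by the sup distance between consecutive step functions.
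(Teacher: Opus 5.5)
Your route is the paper's. You use nested step approximations \(g^f_{F_n}\) from Lemma~\ref{function:real:real:RCDappdbystepfunc}, realize them by finite-spectrum positive elements via Proposition~\ref{SVF:Example:positive_elements_with_finite_spectrum}, choose a coherent chain of projections so that \((a_n)\) is Cauchy, and pass to the limit with Proposition~\ref{SVF:Basic_Property:1}~(4). Part (2) is also the paper's argument. Two of your choices differ from the paper but are correct variants: the bound \(\norm{a_{n+1}-a_n}\leq\norm{g^f_{F_{n+1}}-g^f_{F_n}}<3C/2^{n+1}\) (the paper bounds by \(\norm{g^f_{F_n}-f}\)), and using Lemma~\ref{uniform_limit:RC:DP} for right-continuity as well (the paper argues directly with \(\varepsilon/3\)).

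There is one genuine, though fillable, gap, at the point you flag yourself. In the stable case Lemma~\ref{projection:depart:sequence}~(2) does not refine the chain for \(F_n\) into a chain for \(F_{n+1}\). That lemma requires \(m_N=M\): the last projection of the old chain must already have the class \(g_M=\max F_{n+1}\). But in general \(\max F_{n+1}>\max F_n\). You must first enlarge the old top projection \(q\) to some \(q'\geq q\) with \([q']_0=\max F_{n+1}\), and only then apply the lemma. The paper does exactly this in Step~2. This step needs an argument that uses stability; one way is as follows.
\begin{enumerate}
  \item Write \(\max F_{n+1}-[q]_0=[e]_0\) with \(e\in P(A)\), and identify \(A\) with \(A\otimes\mathcal{K}\).
  \item The projection \(q\) is close to a projection \(q_0\) in some corner \((1\otimes P_m)A(1\otimes P_m)\), so \(q=uq_0u^*\) for a unitary \(u\in\tilde A\).
  \item Likewise replace \(e\) by an equivalent projection in some corner \((1\otimes P_{m'})A(1\otimes P_{m'})\). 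Push it under \(1\otimes(1-P_m)\) with the isometry \(1\otimes S^m\in M(A)\), where \(S\) is the unilateral shift.
  \item Conjugating back by \(u\) gives \(e'\sim_{\mathrm{MvN}}e\) with \(e'q=0\), and \(q':=q+e'\) works.
\end{enumerate}

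A smaller point concerns the unital case. There you must not only add \([1_A]_0\) to each \(F_n\) but also discard points above it, since they are not classes of projections in \(A\). Taking every \(b_k=[1_A]_0\) in the proof of Lemma~\ref{function:real:real:RCDappdbystepfunc} achieves this. All chains then end at \(1_A\), so no enlargement step is needed in that case.
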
 

\begin{proof}
  (1) 
  Since the case \(f=0\) is trivial, we may assume that \(f\) is not the zero function. 

  Let \(g_0=0\) and \(\{g_n\}_{n=0}^N\) be an increasing sequence in \(K_0(A)^+\)
  with a set of mutually orthogonal projections \(\{p_n\}_{n=1}^{N}\) in \(A\) 
  such that \([\hat{p}_{n}]_0=g_n\) for all \(1\leq n\leq N\). 
  Define \(a(\{g_n\},\{p_n\})\in A\) by
  \begin{equation*}
    a(\{g_n\},\{p_n\})=\sum_{n=1}^{N}(f(g_{n-1}))p_n.
  \end{equation*}
  Since \(f\) is decreasing and \(K_0(A)^+\subset\mathbb{R}\), Proposition~\ref{SVF:Example:positive_elements_with_finite_spectrum} implies that \(s(a(\{g_n\},\{p_n\}))\) is a right-continuous decreasing step function on \(K_0(A)^+\) such that
  \begin{align*}
    s(a(\{g_n\},\{p_n\}))
    &=s
    \left(
      \sum_{n=1}^{N}(f(g_{n-1})p_n)
    \right)
    =\sum_{n=1}^{N}f(g_{n-1})
    \chi_{\left[[\hat{p}_{n-1}]_0,[\hat{p}_{n}]_0\right)}\\
    &=\sum_{n=1}^{N}f(g_{n-1})\chi_{[g_{n-1},g_{n})}
    =g^{f}_{\{g_k\}}.
  \end{align*}
  Let \(h_0=0\) and \(\{h_m\}_{m=0}^M\) be an increasing sequence in \(K_0(A)^+\)
  with a set of mutually orthogonal projections \(\{q_m\}_{m=1}^{M}\) in \(A\) 
  such that \([\hat{q}_m]_0=h_m\) for all \(1\leq m\leq M\) and there exists \(1\leq m_1<\cdots<m_N\leq M\) with
  \(\hat{q}_{m_n}=\hat{p}_{n}\text{ for all \(1\leq n\leq N\)}\).
  Then 
  \begin{equation}
    \label{difference_of_operators_gen_by_steps}
    \norm{a(\{g_n\},\{p_n\})-a(\{h_m\},\{q_m\})}\leq \norm{g_{\{g_n\}}^{f}-f}.
  \end{equation}
  In fact, by \(p_n=\sum_{i=m_{n-1}+1}^{m_n}q_i\), we have
  \begin{align*}
    &a(\{g_n\},\{p_n\})-a(\{h_m\},\{q_m\})\\
    &=
    \sum_{n=1}^{N}\sum_{i=m_{n-1}+1}^{m_n}
    (f(g_{n-1})-f(h_{i-1}))q_i
    -
    \sum_{i=m_{N}+1}^{M}f(h_{i-1})q_i.
  \end{align*}
  For each \(1\leq n\leq N, m_{n-1}+1\leq i\leq m_{n}\),
  \begin{align*}
    \abs{f(g_{n-1})-f(h_{i-1})}\leq \abs{f(g_{n-1})-f(g_{n}-)}\leq\norm{g_{\{g_n\}}^{f}-f}.
  \end{align*}
  For each \(m_{N}+1\leq i\leq M\),
  \begin{align*}
    \abs{f(h_{i-1})}\leq\abs{f(g_{N})}\leq\norm{g_{\{g_n\}}^{f}-f}.
  \end{align*}
  Thus 
  \begin{align*}
    &\norm{a(\{g_n\},\{p_n\})-a(\{h_m\},\{q_m\})}\\
    &=
    \max
    \left(
      \max_{1\leq n\leq N, m_{n-1}+1\leq i\leq m_{n}}\abs{f(g_{n-1})-f(h_{i-1})}, 
      \max_{m_{N}+1\leq i\leq M}\abs{f(h_{i-1})}
    \right)\\
    &\leq\norm{g_{\{g_n\}}^{f}-f}.
  \end{align*}
  Hence we obtain the required inequality.

  By Lemma~\ref{function:real:real:RCDappdbystepfunc}, 
  there exists an increasing sequence \(\{F_n\}_{n\in\mathbb{N}}\) of finite subsets in \(K_0(A)^+\) such that, for all \(n\in\mathbb{N}\), \(0\in F_n\) and
  \begin{equation*}
    \norm{g_{F_n}^f-f}<f(0)/2^{n}.
  \end{equation*}
  If \(A\) is unital, since \(f([1_A]_0)=0\),
  we suppose 
  \begin{equation*}
    F_n\subset\{g\in K_0(A)^+\mid 0\leq g\leq [1_A]_0\}, \quad\text{for all \(n\in\mathbb{N}\)}.
  \end{equation*}

  Write
  \begin{equation*}
    F_n=\{0=g_{0}^{(n)}<g_{1}^{(n)}<\cdots<g_{\abs{F_n}-1}^{(n)}\}\quad\text{for all \(n\in\mathbb{N}\)}.
  \end{equation*}
 If \(A\) is stable, 
 then there exists \(q^{(n)}\in P(A)\) with \([q^{(n)}]_0=g_{\abs{F_n}-1}^{(n)}\). 
 If \(A\) is unital, 
 then, since \(A\) has cancellation and \(g_{\abs{F_n}-1}^{(n)}\leq[1_A]_0\),
 there exists \(q^{(n)}\in P(A)\) with \([q^{(n)}]_0=g_{\abs{F_n}-1}^{(n)}\) by \cite[Lemma~3.2~{(1)}]{Rordam:2004:Stable_C*-algebras}. 

  To prove (1), we construct a Cauchy sequence \(\{a_n\}_{n=0}^{\infty}\) in \(A\) such that 
  \begin{equation*}
    \norm{a_n-a_{n-1}}<{f(0)}/{2^{n-1}}\quad\text{and}\quad\norm{s(a_n)-f}<{f(0)}/{2^n},
  \end{equation*}
  for any \(n\in\mathbb{N}\) with \(n\geq 1\).

  \medskip

  \textbf{[Step 1.]} 
  We first consider the case \(n=0\).
  By Lemma~\ref{projection:depart:sequence}~(1), 
  we obtain an increasing sequence \(\{p'_k\}_{k=1}^{\abs{F_0}-1}\) in \(P(A)\) 
  with
  \([p'_k]_0=g_k^{(0)}\)
  for \(1\leq k\leq \abs{F_0}-1\).
  Set \(p_k^{(0)}:=p'_k-p'_{k-1}\) for each \(1\leq k\leq \abs{F_0}-1\), where \(p'_0=0\). 
  Then \(\{p_k^{(0)}\}_{k=1}^{\abs{F_0}-1}\) is a set of mutually orthogonal projections in \(A\) 
  with
  \([\hat{p}^{(0)}_k]_0=g_k^{(0)}\)
  for \(1\leq k\leq \abs{F_0}-1\).
  Put \(a_{0}:=a(\{g_n^{(0)}\},\{p_n^{(0)}\})\).

  \medskip

  \textbf{[Step 2.]} Next we construct \(a_1\).
  Since \(A\) has cancellation, 
  using \cite[Proposition 2.2.2]{Rordam.Larsen.Laustsen} as in Lemma~\ref{Cuntz-subeq2subproj},
  we obtain \(q'\in P(A)\) 
  such that \(q'\sim_{\mathrm{MvN}}q^{(1)},[q']_0=g_{\abs{F_1}-1}^{(1)}\) and \(\hat{p}^{(0)}_{\abs{F_0}-1}\leq q'\).
  Applying Lemma~\ref{projection:depart:sequence}~(2) to \(\{g_k^{(1)}\}_{k=1}^{\abs{F_1}-1}\) and \(\{\hat{p}^{(0)}_{k}\}_{k=1}^{\abs{F_0}}\), 
  where \(\hat{p}^{(0)}_{\abs{F_0}}:=q'\),
  we obtain an increasing sequence \(\{q_k^{(1)}\}_{k=1}^{\abs{F_1}-1}\) in \(P(A)\)
  such that 
  \begin{equation*}
    \{\hat{p}_{k}^{(0)}\}_{k=1}^{\abs{F_0}}\subset\{q_k^{(1)}\}_{k=1}^{\abs{F_1}-1} \text{ and } [q_k^{(1)}]_0=g_k^{(1)} \text{ for all } 1\leq k\leq \abs{F_1}-1.
  \end{equation*}
  Set \(p_{k}^{(1)}:=q_k^{(1)}-q_{k-1}^{(1)}\)
  for each \(1\leq k\leq \abs{F_1}-1\), where \(q_0^{(1)}=0\).
  We obtain mutually orthogonal projections \(\{p_k^{(1)}\}_{k=1}^{\abs{F_1}-1}\) in \(A\) 
  with \([\hat{p}_{k}^{(1)}]_0=g_k^{(1)}\)
  for \(1\leq k\leq \abs{F_1}-1\). 
  Put \(a_{1}:=a(\{g_n^{(1)}\},\{p_n^{(1)}\})\).

  By \(\eqref{difference_of_operators_gen_by_steps}\), we obtain
  \begin{equation*}
    \norm{a_{0}-a_{1}}\leq\norm{g^{f}_{F_0}-f}<{f(0)}/{2^{0}}.
  \end{equation*}

  \medskip

  \textbf{[Step 3.]} 
  Repeating these processes, 
  we obtain 
  \(F_n\) such that \(\norm{g^{f}_{F_n}-f}<f(0)/2^{n}\)
  and \(\{a_n\}\) such that 
  \begin{equation*}
    \norm{a_{n-1}-a_n}<\norm{g^{f}_{F_{n-1}}-f}<f(0)/2^{n-1},\; s(a_n)=g^{f}_{F_n}.
  \end{equation*}
  Thus we have a Cauchy sequence \(\left\{a_{n}\right\}\) in \(A\) 
  and hence \(\left\{a_{n}\right\}\) converges to an element \(a\in A\).
  By Proposition~\ref{SVF:Basic_Property:1}~(4), we have
  \begin{align*}
    \norm{s(a)-f}
    &\leq\norm{s(a)-s(a_n)}+\norm{s(a_n)-f}\\
    &\leq\norm{a-a_n}+\norm{g^{f}_{F_n}-f}.
  \end{align*}
  Letting \(n\to\infty\), we obtain \(s(a)=f\).

  \medbreak

  (2) 
  We may assume that \(a\in A^+\) by Proposition~\ref{SVF:Basic_Property:1}~(5).

  Let \(\varepsilon>0\).
  Since \(A\) has real rank zero, 
  for each \(a\in A^+\), there is \(\tilde{a}\in A^+\) with finite spectrum and \(\norm{a-\tilde{a}}<\varepsilon/3\).
  By Proposition~\ref{SVF:Example:positive_elements_with_finite_spectrum}, 
  for \(\tilde{a}\in A^+\) with finite spectrum, 
  \(s(\tilde{a})\) is right-continuous: 
  for any \(g\in K_0(A)^+\), 
  there exists \(\delta>0\) such that \(g\leq h\in K_0(A)^+, \abs{g-h}\leq \delta\) implies \(s_g(\tilde{a})-s_h(\tilde{a})\leq \varepsilon/3\). 
  We have
  \begin{align*}
    s_g(a)-s_h(a)&=s_g(a)-s_g(\tilde{a})+s_g(\tilde{a})-s_h(\tilde{a})+s_h(\tilde{a})-s_h(a)\\
    &\leq\norm{s(a)-s(\tilde{a})}+\abs{s_g(\tilde{a})-s_h(\tilde{a})}+\norm{s(\tilde{a})-s(a)}\\
    &<\varepsilon/3+\varepsilon/3+\varepsilon/3=\varepsilon,
  \end{align*}
  for any \(g\leq h\in K_0(A)^+, \abs{g-h}\leq \delta\). 
  Thus \(s(a)\) is right-continuous for all \(a\in A\).
  
  Since \(A\) has real rank zero, 
  there exists a sequence \(\{a_n\}\) of positive elements in \(A\) with finite spectrum 
  such that \(\{a_n\}\) uniformly converges to \(a\).
  By Proposition~\ref{SVF:Basic_Property:1}~(4),
  \(\{s(a_n)\}\) uniformly converges to \(s(a)\).
  By Proposition~\ref{SVF:Example:positive_elements_with_finite_spectrum},
  \(DP(\overline{s(a_n)})\subset K_0(A)^+\).
  Thus by Lemma~\ref{uniform_limit:RC:DP}, \(DP(\overline{s(a)})\subset K_0(A)^+\) for all \(a\in A\).
  Since \(A\) has real rank zero, by Proposition~\ref{SVF:Property:vanishes_at_infty}, \(s(a)\) vanishes at infinity.
  This completes the proof.
\end{proof}

\begin{example}
  The previous theorem holds for (the stabilization of) simple unital C\(^*\)-algebra \(A\) with cancellation such that \(K_0(A)\) is totally ordered.
\end{example}

\subsection{Singular value functions for C\(^*\)-algebras with tracial states}

In this subsection, we equip certain \(K_0\)-groups with topologies arising from real vector spaces and investigate whether singular value functions are lower semicontinuous.

\subsubsection{Ordered groups}

We refer the reader to \cite[pp.~23--24]{Effros:1981:Dimensions_and_C*-algebras}.

Let \((G,G^+,u)\) be an ordered group with a fixed order unit,
and let \(S(G)\) denote its state space.
We denote by \(C_\mathbb{R}(S(G))\) the Banach space of continuous functions \(g\colon S(G)\to\mathbb{R}\), equipped with the sup norm.
We let \(\mathrm{Aff}(S(G))\) denote the set of all affine functions in \(C_\mathbb{R}(S(G))\).
There exists a positive homomorphism
\begin{equation}
  \label{GtoAffSG}
  \rho\colon G\longrightarrow \mathrm{Aff}(S(G)),\qquad g\longmapsto \hat{g},
\end{equation}
where \(\hat{g}\) is the continuous affine function on \(S(G)\) defined by 
\begin{equation*}
  \hat{g}\colon S(G)\longrightarrow \mathbb{R},\qquad f\longmapsto f(g).
\end{equation*}

An element \(g\in G\) is called \textit{infinitesimal} if
\(-m u\leq ng\leq m u\) for all \(m,n\in\mathbb{N}\) with \(m,n>0\).
We denote by \(\mathrm{Inf}(G)\) the set of all infinitesimal elements of \(G\).
The following proposition is due to \cite{Goodearl.Handelman:1976}.

\begin{proposition}
  \label{GtoAffSG:simple}
  Let \((G,G^+,u)\) be a simple weakly unperforated ordered group with a fixed order unit.
  \begin{enumerate}
    \item \(G^+=\{g\in G\mid \hat{g}(f)>0\text{ for all } f\in S(G)\}\cup\{0\}\).
    \item \(\mathrm{Inf}(G)=\ker\rho\).
  \end{enumerate}
\end{proposition}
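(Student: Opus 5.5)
The plan is to reduce both parts to one positivity criterion coming from Hahn--Banach, and then use simplicity and weak unperforation to pass from "some multiple is positive" to positivity itself.

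\textbf{Key lemma.} If \(x\in G\) satisfies \(\hat{x}(f)>0\) for all \(f\in S(G)\), then there exist \(n,m\in\mathbb{N}\) with \(n,m>0\) and \(nx\geq mu\). In particular \(nx\) is an order unit.

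To prove it, I would use the sublinear functional on \(G\otimes\mathbb{Q}\) (or directly on \(G\))
\begin{equation*}
  p(y):=\inf\{m/n \mid n\in\mathbb{N}_{>0},\ m\in\mathbb{Z},\ ny\leq mu\}.
\end{equation*}
Since \(u\) is an order unit, \(p\) is finite. It is subadditive and positively homogeneous, and \(p(u)=1\). For the given \(x\), Hahn--Banach gives an additive \(f\) with \(f\leq p\) and \(f(-x)=p(-x)\).
\begin{itemize}
  \item If \(y\geq 0\), then \(-y\leq 0\cdot u\), so \(f(-y)\leq p(-y)\leq 0\); hence \(f\) is positive.
  \item Also \(f(u)\leq 1\) and \(f(-u)\leq -1\), so \(f(u)=1\) and \(f\) is a state.
\end{itemize}
By hypothesis \(f(x)>0\), so \(p(-x)=-f(x)<0\). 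Therefore some \(n>0\) and \(m<0\) satisfy \(n(-x)\leq mu\), that is, \(nx\geq \lvert m\rvert u\). I expect this step to be the main obstacle. The care needed is in checking that \(p\) is well defined and sublinear on \(G\) (torsion issues; if needed, pass to \(G\otimes\mathbb{Q}\) and pull the inequality back).

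\textbf{Part (1).}
\begin{itemize}
  \item Inclusion \(\subseteq\): take \(0\neq g\in G^+\). Simplicity makes \(g\) an order unit, so \(u\leq kg\) for some \(k>0\). For any state \(f\) this gives \(1\leq k f(g)\), hence \(\hat{g}(f)>0\).
  \item Inclusion \(\supseteq\): take \(g\) with \(\hat{g}>0\) on \(S(G)\). The key lemma gives \(ng\geq mu\), so \(ng\) is an order unit, i.e.\ \(ng\in G^+\setminus\{0\}\). Weak unperforation (if \(ng\) is an order unit then \(g\) is) then yields \(g\in G^+\).
\end{itemize}

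\textbf{Part (2).}
\begin{itemize}
  \item Inclusion \(\mathrm{Inf}(G)\subseteq\ker\rho\): if \(-u\leq ng\leq u\) for all \(n>0\), applying a state gives \(\lvert f(g)\rvert\leq 1/n\) for every \(n\). Hence \(f(g)=0\) for all \(f\in S(G)\).
  \item Inclusion \(\ker\rho\subseteq\mathrm{Inf}(G)\): if \(\hat{g}=0\), then for all \(m,n>0\) the elements \(mu\pm ng\) take the value \(m>0\) at every state. By (1) both lie in \(G^+\), which gives \(-mu\leq ng\leq mu\).
\end{itemize}
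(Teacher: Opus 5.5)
The paper gives no proof of this proposition; it only attributes it to Goodearl--Handelman. Your argument is correct and complete, and it is essentially the classical proof behind that citation. Your functional \(p(y)=\inf\{m/n\mid ny\le mu\}\) is the Goodearl--Handelman upper bound for state extensions from the subgroup \(\mathbb{Z}u\). Since every state satisfies \(f\le p\), your Hahn--Banach step actually proves \(\min_{f\in S(G)}\hat{x}(f)=-p(-x)=\sup\{m/n\mid n>0,\ mu\le nx\}\), with the minimum attained, and the key lemma is immediate from this. What writing it out buys over the bare citation is a clear account of where each hypothesis enters:
\begin{itemize}
\item the key lemma and the inclusion \(\mathrm{Inf}(G)\subseteq\ker\rho\) hold in any ordered group with order unit;
\item simplicity is used only for the inclusion \(\subseteq\) in (1);
\item weak unperforation is used only for the inclusion \(\supseteq\) in (1), and hence for \(\ker\rho\subseteq\mathrm{Inf}(G)\).
\end{itemize}

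The technical points you flagged close without trouble. The bound \(p>-\infty\) needs only \(u\neq 0\), and the case \(G=0\) is trivial. Homogeneity forces \(p=0\) on torsion elements, so \(y\otimes 1/k\mapsto p(y)/k\) is a well-defined sublinear functional on \(G\otimes\mathbb{Q}\). There the usual one-step extension and Zorn argument work with rational scalars. Note also that your construction always produces a state, so \(S(G)\neq\emptyset\) whenever \(G\neq 0\), and part (1) is never vacuous. Your reading of weak unperforation is the standard one for simple groups: if \(ng\) is a nonzero positive element (equivalently, an order unit), then \(g\ge 0\).
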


By Proposition~\ref{GtoAffSG:simple}~{(2)},
if \(G\) is simple and weakly unperforated,
then the kernel of \(\rho\) coincides with \(\mathrm{Inf}(G)\). 
In particular, the induced map \(G/\mathrm{Inf}(G)\to\rho(G)\subset\mathrm{Aff}(S(G))\) is an order isomorphism.

We equip \(\mathrm{Inf}(G)\) with discrete topology, and \(\mathrm{Aff}(S(G))\subset C_\mathbb{R}(S(G))\) with the topology induced by sup norm.

Let \((G,G^+,u)\) be a simple weakly unperforated ordered group with a fixed order unit.
If the short exact sequence
\begin{equation*}
  0\to\mathrm{Inf}(G)\hookrightarrow G\to G/\mathrm{Inf}(G)\to 0,
\end{equation*}
splits, then we equip \(G=\mathrm{Inf}(G)\oplus G/\mathrm{Inf}(G)\) with the product topology induced by the discrete topology on \(\mathrm{Inf}(G)\) and the sup norm topology on \(\mathrm{Aff}(S(G))\). 

\subsubsection{Topologies on \(K_0\)-groups}

The pre-ordered \(K_0\)-groups often behave like ordered groups with order units. 
For example, 
if \(A\) is a stably finite, stably unital C\(^*\)-algebra such that \(M_\infty(A)\) has a full projection \(p\), 
then \((K_0(A), K_0(A)^+, [p]_0)\) is an ordered group with an order unit.
By \cite{Blackadar.Rordam:1992:states_and_quasitraces,Haagerup:2014:Quasitraces_on_exact_C*-algebras_are_traces},
if \(A\) is an exact C\(^*\)-algebra of real rank zero and \(K_0(A)\) is an ordered group with order units,
then \(T(A)\) can be canonically identified with \(S(K_0(A))\).

Applying these facts,
we have the following definition.

\begin{definition}
  \label{K_0-group:Topologies}
  Let \(A\) be an exact
  C\(^*\)-algebra of real rank zero.
  Suppose that \(K_0(A)\) is a simple and weakly unperforated ordered group with an order unit.
  If the short exact sequence
  \begin{equation*}
    0\to\mathrm{Inf}(K_0(A))\hookrightarrow K_0(A)\to K_0(A)/\mathrm{Inf}(K_0(A))\to 0,
  \end{equation*}
  splits, then we equip \(K_0(A)\) with the product topology induced by the discrete topology on \(\mathrm{Inf}(K_0(A))\) and the sup norm topology on \(\mathrm{Aff}(T(A))\).
\end{definition}

\begin{example}
  Let \(A\) be an exact, simple and unital C\(^*\)-algebra of real rank zero with a unique tracial state \(\tau\) (thus \(A\) is stably finite).
  If \(K_0(A)\) is weakly unperforated and 
  \begin{equation*}
    \mathrm{Inf}(K_0(A))=\ker(\rho)=\ker(\tau_*)=\{0\},
  \end{equation*}
  then we can regard \(K_0(A)\subset\mathbb{R}\).
  In this case, if \(A\) has cancellation, then singular value functions on \(K_0(A)^+\) are right-continuous (by Theorem~\ref{SVF:form:simple_and_totally_ordered}).
  It is similar to singular value functions for von Neumann algebras.
\end{example}

\begin{example}
  By the theorem of Effros-Handelman-Shen \cite{Effros.Handelman.Shen:1980}, 
  there exist AF algebras with the following ordered \(K_0\)-groups.
  \begin{enumerate}
    \item \(\left(\mathbb{Q}\oplus\mathbb{Z}, \{(u,v)\in\mathbb{Q}\oplus\mathbb{Z}\mid u>0\}\cup\{(0,0)\}\right)\),
    \item \(\left(\mathbb{Q}^k,\{(x_0,\dots,x_{k-1})\in\mathbb{Q}^k\mid x_i>0 \text{ for all }0\leq i\leq k-1\}\cup\{0\}\right)\),
    \item \(\left(\mathbb{Q}^k\oplus\mathbb{Z},\{(u,v)\in\mathbb{Q}^k\oplus\mathbb{Z}\mid u>0\}\cup\{(0,0)\}\right)\).
  \end{enumerate}
  If these C\(^*\)-algebras are equipped with topologies defined above, then the singular value functions are not necessarily lower semicontinuous.
  
  Let \(A\) be an AF algebra such that \((K_0(A),K_0(A)^+)\) is case (1),
  and let \(p\) be a nonzero projection in \(A\) with \([p]_0=(u,v)\).
  For \((w,z)\in K_0(A)^+\), we have
  \begin{align*}
    s_{(w,z)}(p)&=
    \begin{cases}
      1 & \text{if \((u,v)\not\leq (w,z)\),}\\
      0 & \text{if \((u,v)\leq (w,z)\),}
    \end{cases}\\
    &=
    \begin{cases}
      1 & \text{if \(u>w \text{, or } u=w \text{ and } v\neq z\),}\\
      0 & \text{if \(u<w \text{, or } u=w \text{ and } v=z\),}
    \end{cases}
  \end{align*}
  by Lemma~\ref{SVF:Example:projection}.
  Fix \(z\neq v\). 
  Then \((u+1/n,z)\) converges to \((u,z)\)
  in the product topology, while
  \begin{equation*}
    s_{(u+1/n,z)}(p)=0,\qquad s_{(u,z)}(p)=1.
  \end{equation*}
  for all \(n\in\mathbb{N}_{\geq 1}\).
  Thus \(s(p)\) is not lower semicontinuous.
\end{example}

\section*{Acknowledgments}

The author would like to thank his supervisor, 
Norio Nawata, 
for introducing him to this problem and for many helpful discussions.
The author used ChatGPT for language editing. 

\bibliography{Bib} 
\bibliographystyle{amsalpha} 

\end{document}